\documentclass[a4paper,10pt]{amsart}
\usepackage{cmap}
\usepackage[T1]{fontenc}
\usepackage[utf8]{inputenc}
\usepackage{lmodern}
\usepackage{microtype}
\usepackage[margin=27mm]{geometry}
\usepackage{mathtools,amssymb}
\usepackage{enumitem}
\usepackage{needspace}
\usepackage{xcolor}
\usepackage[hidelinks]{hyperref}
\newif\ifshowchanges
\showchangestrue

\DeclareMathOperator{\Ann}{Ann}
\DeclareMathOperator{\Hom}{Hom}
\DeclareMathOperator{\Ext}{Ext}
\DeclareMathOperator{\Spec}{Spec}
\DeclareMathOperator{\Max}{Max}
\DeclareMathOperator{\Att}{Att}
\DeclareMathOperator{\tor}{tor}
\DeclareMathOperator{\Supp}{Supp}
\DeclareMathOperator{\Cl}{Cl}
\newcommand{\GV}{\operatorname{GV}}
\newcommand{\wMax}{w\text{-}\Max}
\newcommand{\tMax}{t\text{-}\Max}
\newcommand{\wSpec}{w\text{-}\Spec}
\newcommand{\wSupp}{w\text{-}\Supp}
\newcommand{\wdim}{w\text{-}\dim}
\newcommand{\Nag}{\{X\}}
\newcommand{\N}{\mathbb N}
\newcommand{\Q}{\mathbb Q}
\newcommand{\Z}{\mathbb Z}
\newcommand{\wrad}[1]{\sqrt[w]{#1}}
\newtheorem{theorem}{Theorem}[section]
\newtheorem{proposition}[theorem]{Proposition}
\newtheorem{lemma}[theorem]{Lemma}
\newtheorem{corollary}[theorem]{Corollary}
\theoremstyle{definition}
\newtheorem{definition}[theorem]{Definition}
\theoremstyle{remark}
\newtheorem{remark}[theorem]{Remark}
\setlist[enumerate]{label=\textnormal{(\arabic*)},leftmargin=2.2em,itemsep=2pt}

\begin{document}
\title[$t$-almost Dedekind domains and $w$-secondary modules]
{Characterizations of $t$-Almost Dedekind Domains via $w$-Secondary Modules}
\author[H. Kim]{Hwankoo Kim}
\address{Division of Computer Engineering, Hoseo University, Asan 31499, Republic of Korea}
\email{hkkim@hoseo.edu}
\author[H. Baek]{Hyungtae Baek}
\address{School of Mathematics, Kyungpook National University, Daegu 41566, Republic of Korea}
\email{htbaek5@gmail.com}
\author[H. Hamdi]{Haleh Hamdi}
\address{CEMAT and Departamento de Matem\'atica, Faculdade de Ci\^encias da Universidade de Lisboa, Campo Grande, Edif\'icio C6, 1749-016 Lisboa, Portugal}
\email{hhamdi@fc.ul.pt; h.hamdimoghadam@tabrizu.ac.ir}
\thanks{H. Hamdi is the corresponding author.}
\thanks{H. Kim was supported by the Basic Science Research Program through the National Research Foundation of Korea (NRF), funded by the Ministry of Education (2021R1I1A3047469). H. Hamdi was supported by Funda\c{c}\~ao para a Ci\^encia e Tecnologia, Portugal, through projects \url{https://doi.org/10.54499/UID/04621/2025} and \url{https://doi.org/10.54499/2023.06156.CEECIND/CP2831/CT0002}.}
\thanks{ORCID iDs: H. Kim, 0000-0002-3563-077X; H. Baek, 0000-0003-1477-5715; H. Hamdi, 0000-0002-1465-5679.}
\subjclass[2020]{Primary 13A15, 13F05}
\keywords{$t$-almost Dedekind domain, $w$-operation, secondary representation, $w$-secondary module, $w$-representable $w$-module, local strong representability}
\date{}

\begin{abstract}
Let $R$ be an integral domain and let $w$ denote the $w$-operation on $R$. A nonzero GV-torsion-free $R$-module $M$ is called $w$-secondary if, for every $r\in R$, either $M/rM$ is GV-torsion or $r^nM=0$ for some $n\in\N$. A $w$-module $M$ is said to admit a $w$-secondary representation if $M=(N_1+\cdots+N_n)_w$, where each $N_i$ is a $w$-secondary submodule of $M$. Such a representation is called minimal if the ideals $P_i=\sqrt{\Ann_R(N_i)}$ are pairwise distinct, and $M$ is called $w$-representable if it admits such a representation. If the sum can be chosen direct, then $M$ is called strongly $w$-representable. We characterize $w$-secondary modules and investigate their localization and $w$-closure properties. For a nonfield domain $R$, we prove that $R$ is $t$-almost Dedekind if and only if $\wdim R=1$ and every $w$-representable $w$-module is locally strongly $w$-representable: each of its nonzero localizations at maximal $w$-ideals is strongly representable. Global strong $w$-representability is retained when all the radical annihilators in the representation are nonzero.
\end{abstract}

\maketitle

\makeatletter
\renewcommand\@makefnmark%
{\mbox{\textsuperscript{\normalfont\@thefnmark)}}}
\makeatother

\section{Introduction}
As a dual notion to the primary decomposition of modules over a commutative ring, the concept of secondary representation has been studied since the seminal work \cite{Macdonald1973}. Recall that a nonzero unital module $M$ over a commutative ring $R$ with identity is said to be {\em secondary} if, for each $r\in R$, either $rM=M$ or $r^nM=0$ for some $n\in\N$.

In recent years, the theory of secondary representations of modules over commutative rings with identity has attracted considerable attention; see, for instance, \cite{Atani2001,Pakyari2020,PakyariRMJ2023,PakyariCA2023}, especially in the context of Dedekind and almost Dedekind domains. The notion has also been extended with respect to multiplicatively closed subsets and torsion theories; see \cite{Farshadifar2021,Garcia2013,Rezazadeh2026}.

Let $\tau$ be an idempotent kernel functor in the category $R$-Mod, and let $\mathcal T_\tau$ and $\mathcal F_\tau$ denote the classes of $\tau$-torsion and $\tau$-torsion-free modules. In \cite{Garcia2013}, an $R$-module $M$ is called {\em $\tau$-secondary} if $M\notin\mathcal T_\tau$ and, for every $r\in R$, multiplication by $r$ has torsion cokernel or $r^nM\in\mathcal T_\tau$ for some $n\in\N$. We investigate secondary representations with respect to the $w$-operation, focusing on $t$-almost Dedekind domains.

Let $R$ be an integral domain with quotient field $K$. For nonzero fractional ideals $I,J$ of $R$, set $(I:_KJ)=\{x\in K\mid xJ\subseteq I\}$ and $I^{-1}=(R:_KI)$. The $v$-, $t$-, and $w$-closures are
$$
 I_v=(I^{-1})^{-1},\qquad
 I_t=\bigcup_{\substack{0\neq J\subseteq I\\J\text{ finitely generated}}}J_v,
 \qquad
 I_w=\bigcup_{\substack{J\subseteq R\text{ finitely generated}\\J^{-1}=R}}(I:_KJ).
$$
For $\star\in\{v,t,w\}$, these operations satisfy $(xI)_\star=xI_\star$, $R_\star=R$, $I\subseteq I_\star$, monotonicity, and idempotence. An ideal $I$ is a {\em $\star$-ideal} if $I=I_\star$, and a maximal $\star$-ideal is maximal among the proper integral $\star$-ideals. The ideal $I$ is {\em $\star$-invertible} if $(IJ)_\star=R$ for some fractional ideal $J$. One has $\tMax(R)=\wMax(R)$, and maximal $w$-ideals are prime \cite[Corollary~2.17]{AndersonCook2000}. Also, $t$-invertibility and $w$-invertibility coincide. The {\em $t$-dimension} is the supremum of the lengths of chains of prime $t$-ideals. Similarly, {\em $\wdim R$} is the supremum of the lengths of chains of prime $w$-ideals, equivalently the supremum of their heights. Thus $t\text{-}\dim R\leq\wdim R\leq\dim R$.

A finitely generated ideal $J$ of a commutative ring $R$ is a \emph{Glaz--Vasconcelos ideal}, or \emph{GV-ideal}, if the natural homomorphism $R\to\Hom_R(J,R)$ is an isomorphism; write $J\in\GV(R)$. Over a domain, $\Hom_R(J,R)\cong J^{-1}$. For an ideal of a commutative ring, the same $w$-closure is described by $I_w=\{r\in R\mid Jr\subseteq I\text{ for some }J\in\GV(R)\}$. Following \cite{WangKim2024}, put
$$
 \tor_{\GV(R)}(M)=\{x\in M\mid Jx=0\text{ for some }J\in\GV(R)\}.
$$
A module is \emph{GV-torsion} if this submodule equals $M$, and \emph{GV-torsion-free} if it is zero. A \emph{$w$-module} is a GV-torsion-free module $M$ such that $\Ext_R^1(R/J,M)=0$ for every $J\in\GV(R)$. For a GV-torsion-free module $M$, its $w$-envelope is
$$
 M_w=\{x\in E(M)\mid Jx\subseteq M\text{ for some }J\in\GV(R)\},
$$
where $E(M)$ is its injective hull. Thus $M$ is a {\em $w$-module} exactly when $M_w=M$. If $R$ is a domain and $M$ is torsion-free in the ordinary sense, then $E(M)=M\otimes_RK$. A GV-torsion-free module $M$ is a $w$-module if and only if there is an exact sequence $0\to M\to N\to C\to0$ with $N$ a $w$-module and $C$ GV-torsion-free \cite[Theorem~6.1.17]{WangKim2024}. Moreover, $M$ is GV-torsion if and only if there is an exact sequence $0\to A\to F\to M\to0$ with $F$ a $w$-module and $A_w=F$ \cite[Theorem~6.2.5]{WangKim2024}.

A GV-torsion-free module $M$ is \emph{of finite type} if $M_w=L_w$ for some finitely generated submodule $L\subseteq M$. For every GV-torsion-free $M$ and $Q\in\wMax(R)$, one has $(M_w)_Q\cong M_Q$ \cite[Theorem~6.2.14]{WangKim2024}.
For a submodule $L$ of a GV-torsion-free module $M$, its relative $w$-closure is
$$
 L^{w,M}=\{x\in M\mid Jx\subseteq L\text{ for some }J\in\GV(R)\}.
$$
Membership is detected by maximal $w$-localizations: $x\in L^{w,M}$ exactly when $x/1\in L_Q$ for every $Q\in\wMax(R)$. In particular, the diagonal map $M\to\prod_{Q\in\wMax(R)}M_Q$ is injective. We use this formulation rather than an intersection of localizations: for a module with ordinary torsion, the individual maps $M\to M_Q$ need not be injective. If $M$ is a $w$-module, the embedded copy of $L_w$ equals $L^{w,M}$; we write it as $L_w$ when the ambient module is clear \cite{KimLimZhou2019,WangKim2024}.

Consider the hereditary torsion theory $\tau_w$ associated to the Gabriel topology $\mathcal F=\{I\subseteq R\mid I_w=R\}$. For a GV-torsion-free module, $r^nM$ is GV-torsion exactly when $r^nM=0$. Consequently its $\tau_w$-secondary condition is precisely the $w$-secondary condition studied here \cite{Garcia2013}.
For a nonzero $w$-secondary module $M$, the ideal $\sqrt{\Ann_R(M)}$ is a prime $w$-ideal; see Proposition~\ref{prop:radical}. The nonzero and secondary hypotheses are essential for this assertion.

We establish localization and $w$-closure criteria for $w$-secondary modules. In general, the nonzero localizations can occur at several primes containing the radical annihilator. Singleton support is available when that radical is a maximal $w$-ideal. Over a P$v$MD we also obtain the corresponding Nagata-ring characterization. We then classify the nonzero-radical $w$-secondary modules over $t$-almost Dedekind domains and retain global strong $w$-representability when every radical in the representation is nonzero. The final characterization uses \emph{local} strong $w$-representability; a local complement is not asserted to descend to a global complement.

\section{$w$-Secondary modules}
This section is devoted to the study of $w$-secondary modules, which are the $w$-operation analogue of classical secondary modules. After establishing their basic properties, we examine localization and $w$-closure and derive results used in the study of $w$-secondary representations.

\begin{definition}\label{def:w-radical}
{\em For an ideal $I\subseteq R$, set
$$
 \wrad{I}=\{r\in R\mid Jr^n\subseteq I
       \text{ for some }n\geq1\text{ and }J\in\GV(R)\}.
$$
Equivalently, $\wrad{I}=\sqrt{I_w}$. In particular, $\sqrt I\subseteq\wrad I$ and $\wrad I$ is a $w$-ideal.
}
\end{definition}

\begin{definition}\label{def:w-support}
{\em The \emph{$w$-support} of an $R$-module $M$ is $\wSupp(M)=\{Q\in\wSpec(R)\mid M_Q\neq0\}$, where $\wSpec(R)$ denotes the set of prime $w$-ideals.
}
\end{definition}

\begin{remark}\label{rem:local-detection}
{\em If $Q$ is a prime $w$-ideal, every $J\in\GV(R)$ meets $R\setminus Q$, so $JR_Q=R_Q$. Hence every GV-torsion module has zero localization at $Q$. Conversely, a module is GV-torsion if its localizations at all maximal $w$-ideals vanish \cite[Theorem~6.2.13]{WangKim2024}.
Every $R_Q$-module is a $w$-module as an $R$-module \cite[Proposition~1.5]{WangKim2015}. For a domain, every prime contained in a prime $w$-ideal is also a $w$-ideal: if $\mathfrak p\subseteq Q$, $Jx\subseteq\mathfrak p$, and $J\in\GV(R)$, choose $j\in J\setminus Q$ and use primality of $\mathfrak p$.
}
\end{remark}

\begin{lemma}\label{lem:scalar-localization}
Let $R$ be a commutative ring, $M$ a GV-torsion-free $R$-module, $Q$ a prime $w$-ideal, and $r\in R$.
\begin{enumerate}
\item If $M/rM$ is GV-torsion, then $M_Q=rM_Q$. If also $r\notin Q$, multiplication by $r/1$ is an automorphism of $M_Q$.
\item If $r\in\wrad{\Ann_R(M)}$, then $(r/1)^nM_Q=0$ for some $n\geq1$; the same exponent works at every prime $w$-ideal.
\end{enumerate}
\end{lemma}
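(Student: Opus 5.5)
For (1), I will localize the short exact sequence
$$0\to rM\to M\to M/rM\to 0$$
at $Q$. Localization is exact, so $(M/rM)_Q\cong M_Q/rM_Q$. Since $M/rM$ is GV-torsion, Remark~\ref{rem:local-detection} gives $(M/rM)_Q=0$, hence $M_Q=rM_Q=(r/1)M_Q$. This shows that multiplication by $r/1$ is surjective on $M_Q$.

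If moreover $r\notin Q$, then $r/1$ is a unit of $R_Q$. Multiplication by a unit is an automorphism of any $R_Q$-module, so it is an automorphism of $M_Q$. In this subcase the surjectivity already obtained is not even needed.

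For (2), I will unwind Definition~\ref{def:w-radical}. Since $r\in\wrad{\Ann_R(M)}$, there are $n\geq1$ and $J\in\GV(R)$ with $Jr^n\subseteq\Ann_R(M)$, that is, $Jr^nM=0$. Thus every element $x\in r^nM$ satisfies $Jx=0$, so $r^nM$ is a GV-torsion submodule of $M$. But $M$ is GV-torsion-free, so $r^nM=0$ and $r\in\sqrt{\Ann_R(M)}$. Then $(r/1)^nM_Q=(r^nM)_Q=0$ at every prime $w$-ideal $Q$, and the exponent $n$ came from $r$ alone, so it is the same at every $Q$.

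If one prefers not to use GV-torsion-freeness here, there is a second route. Remark~\ref{rem:local-detection} gives some $j\in J\setminus Q$. Then $jr^nM=0$ with $j/1$ a unit in $R_Q$, so again $(r/1)^nM_Q=0$, and $n$ still does not depend on $Q$.

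I do not anticipate a genuine obstacle. The only point needing care is the uniformity claim in (2), and it holds because $n$ is fixed before any localization is taken.
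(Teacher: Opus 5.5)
Your proof is correct and matches the paper's argument: part (1) is exactly the paper's step of localizing $M/rM$ at $Q$ and noting that $r/1$ is a unit, and your ``second route'' for (2), choosing $j\in J\setminus Q$ (i.e.\ $JR_Q=R_Q$), is precisely the paper's proof. Your primary route for (2) is also valid and slightly stronger: it first uses GV-torsion-freeness to get $r^nM=0$ globally, which shows $\wrad{\Ann_R(M)}=\sqrt{\Ann_R(M)}$ for GV-torsion-free $M$.
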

\begin{proof}
The first assertion follows by localizing $M/rM$ and noting that $r/1$ is a unit when $r\notin Q$. For the second, choose $n$ and $J\in\GV(R)$ with $Jr^n\subseteq\Ann_R(M)$. Since $JR_Q=R_Q$, localization gives $(r/1)^nM_Q=0$.
\end{proof}

\begin{definition}\label{def:w-secondary}
{\em Let $R$ be a commutative ring. A nonzero GV-torsion-free module $M$ is called \emph{$w$-secondary} if, for every $r\in R$, either $M/rM$ is GV-torsion or $r^nM=0$ for some $n\geq1$.
}
\end{definition}

\begin{proposition}\label{prop:local-secondary}
Let $M$ be a nonzero GV-torsion-free $w$-secondary module, put $P=\sqrt{\Ann_R(M)}$, and let $Q\in\wSpec(R)$ satisfy $M_Q\neq0$. Then $P\subseteq Q$, $M_Q$ is a classical secondary $R_Q$-module, and
$$
 \sqrt{\Ann_{R_Q}(M_Q)}=PR_Q.
$$
\end{proposition}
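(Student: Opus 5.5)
The plan is to work element by element with $r\in R$, using the dichotomy in Definition~\ref{def:w-secondary} together with Lemma~\ref{lem:scalar-localization}. The key preliminary observation is that the two alternatives interact well with $P$. If $r\in P$, then $r^nM=0$ for some $n$, so $(r/1)^nM_Q=0$. If $r\notin P$, then no power of $r$ annihilates $M$, so the dichotomy forces $M/rM$ to be GV-torsion. By Lemma~\ref{lem:scalar-localization}(1) we then get $rM_Q=M_Q$.

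\emph{Step 1: $P\subseteq Q$.} Suppose some $r\in P$ lies outside $Q$. Then $r/1$ is a unit of $R_Q$. But $(r/1)^nM_Q=0$ for some $n$, so $M_Q=0$, contradicting the hypothesis. Hence $P\subseteq Q$.

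\emph{Step 2: $M_Q$ is secondary over $R_Q$.} Let $a=r/s\in R_Q$ with $s\notin Q$. Since $s/1$ is a unit, $aM_Q=(r/1)M_Q$ and $a^nM_Q=0$ if and only if $(r/1)^nM_Q=0$. So it suffices to test elements of the form $r/1$.
\begin{itemize}
\item If $r\in P$, the preliminary observation gives $(r/1)^nM_Q=0$.
\item If $r\notin P$, it gives $(r/1)M_Q=M_Q$.
\end{itemize}
Since $M_Q\neq0$, this shows that $M_Q$ is a classical secondary $R_Q$-module.

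\emph{Step 3: the radical.} Let $\mathfrak q=\sqrt{\Ann_{R_Q}(M_Q)}$.
\begin{itemize}
\item For $PR_Q\subseteq\mathfrak q$: each $r\in P$ satisfies $(r/1)^nM_Q=0$, so $r/1\in\mathfrak q$. Since $\mathfrak q$ is an ideal, $PR_Q\subseteq\mathfrak q$.
\item For $\mathfrak q\subseteq PR_Q$: take $r/s\in\mathfrak q$. Then $(r/1)^nM_Q=0$ for some $n$. If $r\notin P$, Step 2 gives $(r/1)M_Q=M_Q$, hence $(r/1)^nM_Q=M_Q\neq0$, a contradiction. So $r\in P$ and $r/s\in PR_Q$.
\end{itemize}

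I expect no step to be a serious obstacle. The one point needing care is the reduction in Step 2 from arbitrary fractions to elements $r/1$. It also relies on the fact that in the case $r\notin P$ the dichotomy forces the GV-torsion alternative; this holds because $P$ is exactly the set of $r$ with $r^nM=0$ for some $n$. We never need to know that $P$ is prime, or that $P$ is a $w$-ideal.
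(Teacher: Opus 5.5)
Your proposal is correct and follows essentially the same route as the paper. You get $P\subseteq Q$ from a power of the unit $r/1$ annihilating $M_Q$, you reduce the secondary test on $M_Q$ to the $w$-secondary dichotomy for $r/1$, and you identify the radical by noting that an element outside $P$ acts surjectively on $M_Q\neq0$, so it cannot act nilpotently; the paper phrases this last step as showing that the contraction of $\sqrt{\Ann_{R_Q}(M_Q)}$ to $R$ is exactly $P$.
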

\begin{proof}
If $r\in P\setminus Q$, a power of the unit $r/1$ annihilates $M_Q$, a contradiction. Thus $P\subseteq Q$. For $r/s\in R_Q$, the $w$-secondary condition gives either $r^nM_Q=0$ or $rM_Q=M_Q$. Hence $M_Q$ is secondary. If $r\in P$, multiplication by $r/s$ is nilpotent. If $r\notin P$, no power of $r$ annihilates $M$, so $M/rM$ is GV-torsion and multiplication by $r/s$ on $M_Q$ is surjective. A surjective nilpotent endomorphism of a nonzero module is impossible. Therefore the contraction of $\sqrt{\Ann_{R_Q}(M_Q)}$ is exactly $P$, which proves the equality.
\end{proof}

\begin{proposition}\label{prop:radical}
Let $M$ be a nonzero GV-torsion-free $w$-secondary module. Then $I=\Ann_R(M)$ is a primary $w$-ideal and
$$
 P=\sqrt I=\wrad I
$$
is a prime $w$-ideal. In this case, $M$ is also called $P$-$w$-secondary.
\end{proposition}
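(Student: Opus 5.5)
The proof will go in four steps, in this order: $I$ is a $w$-ideal; $I$ is primary, so $P=\sqrt I$ is prime; $P=\wrad I$; and $P$ is a $w$-ideal. The central tool is GV-torsion-freeness of $M$.

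\emph{$I$ is a $w$-ideal.} Let $r\in I_w$, so $Jr\subseteq I$ for some $J\in\GV(R)$. Then $J(rm)=0$ for every $m\in M$. Since $M$ is GV-torsion-free, $rm=0$ for all $m$, so $r\in I$.

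\emph{$I$ is primary.} First, $I\neq R$ because $M\neq0$. Now let $ab\in I$ with $a\notin\sqrt I$; we show $b\in I$. Since no power of $a$ annihilates $M$, the $w$-secondary condition forces $M/aM$ to be GV-torsion. Given $m\in M$, choose $J\in\GV(R)$ with $Jm\subseteq aM$. For each $j\in J$ write $jm=am_j$; then $j(bm)=abm_j=0$. Hence $J(bm)=0$, and GV-torsion-freeness gives $bm=0$. Thus $bM=0$, i.e.\ $b\in I$. It follows that $I$ is primary, and $P=\sqrt I$ is a prime ideal.

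\emph{$P=\wrad I$.} By Definition~\ref{def:w-radical}, $\wrad I=\sqrt{I_w}$. Since $I_w=I$ by the first step, $\wrad I=\sqrt I=P$.

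\emph{$P$ is a $w$-ideal.} Let $r\in P_w$, so $Jr\subseteq P$ for some $J\in\GV(R)$. We must show $r\in P$. If $P$ were contained in a prime $w$-ideal $Q$, we could pick $j\in J\setminus Q$ as in Remark~\ref{rem:local-detection}; then $jr\in P$ with $j\notin P$ gives $r\in P$. That remark is stated only for domains, however, so I will instead argue directly, which works for any commutative ring. Write $J=(j_1,\dots,j_k)$ and choose $n$ with $j_i^nr^n\in I$ for each $i$. Then $J^{(n)}=(j_1^n,\dots,j_k^n)$ satisfies $J^{(n)}r^n\subseteq I$. The remaining input is that $J^{(n)}$ is again a GV-ideal, since $\sqrt{J^{(n)}}=\sqrt J$ and GV-ideals are closed under products and ideals of the same radical containing a power (standard, \cite{WangKim2024}). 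Granting this, $r^n\in I_w=I$, so $r\in P$.

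I expect the last step, that $P$ is a $w$-ideal, to be the main obstacle, because it depends on the GV-property surviving passage to $J^{(n)}$. An alternative avoids this. Suppose $r\notin P$; then $M/rM$ is GV-torsion, and iterating the argument of the primary step shows that $M/r^nM$ is GV-torsion for every $n$. Then, since $Jr\subseteq P$, for each $j\in J$ some power of $jr$ kills $M$. Working with the finitely many generators of $J$, this yields $J'r^N\subseteq I$ for a GV-ideal $J'$, namely a power of $J$. The conclusion is again $r^N\in I_w=I$, contradicting $r\notin P$.
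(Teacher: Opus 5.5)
Your proof is correct and follows the paper's argument step for step. You get $I_w=I$ and primariness from GV-torsion-freeness, then $\wrad{I}=\sqrt{I_w}=\sqrt I$, and finally $P_w=P$ by converting $Jr\subseteq P$ into a GV-ideal times a power of $r$ inside $I=I_w$. The only difference is cosmetic and lies in the last step. The paper applies $(rJ)^h\subseteq I$ directly to the finitely generated ideal $rJ\subseteq P$ and uses $J^h\in\GV(R)$. That is exactly your fallback route, and it avoids the extra fact about $J^{(n)}=(j_1^n,\dots,j_k^n)$. That fact does hold, since $J^{(n)}\supseteq J^{k(n-1)+1}$ and finitely generated ideals containing a GV-ideal are GV.
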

\begin{proof}
If $Jr\subseteq I$ for $J\in\GV(R)$, then $J(rm)=0$ for every $m\in M$. GV-torsion-freeness gives $rM=0$, so $I_w=I$.

Suppose $ab\in I$ and $a\notin\sqrt I$. Then $M/aM$ is GV-torsion. Multiplication by $b$ factors through this quotient, so $bM$ is both GV-torsion and a submodule of the GV-torsion-free module $M$. Hence $bM=0$, proving that $I$ is primary. Its radical $P$ is therefore prime. Finally, if $Jx\subseteq P$ with $J\in\GV(R)$, finite generation gives $(xJ)^h\subseteq I$ for some $h\geq1$. Since $J^h$ is a GV-ideal \cite[Proposition~6.1.9]{WangKim2024}, GV-torsion-freeness gives $x^hM=0$. Thus $x\in P$, so $P_w=P$. Since $I_w=I$, Definition~\ref{def:w-radical} gives $\wrad I=\sqrt I$.
\end{proof}

\begin{lemma}\label{lem:support}
Let $M$ be a nonzero GV-torsion-free $w$-secondary module and put $P=\sqrt{\Ann_R(M)}$. Then
$$
 \wSupp(M)\subseteq
 \{Q\in\wSpec(R)\mid P\subseteq Q\}.
$$
If $P\in\wMax(R)$, then $\wSupp(M)=\{P\}$.
\end{lemma}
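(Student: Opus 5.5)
The first inclusion is essentially the first step of Proposition~\ref{prop:local-secondary}, so I will simply invoke it. If $Q\in\wSupp(M)$, then $Q$ is a prime $w$-ideal with $M_Q\neq0$, and Proposition~\ref{prop:local-secondary} gives $P\subseteq Q$. For completeness the direct argument is short. If some $r\in P\setminus Q$ existed, then $r^nM=0$ for some $n$, so $(r/1)^nM_Q=0$. But $r/1$ is a unit of $R_Q$, which would force $M_Q=0$.

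For the second assertion, assume $P\in\wMax(R)$. By Proposition~\ref{prop:radical}, $P$ is a prime $w$-ideal, so it is a candidate for the support. First I show that every element of the support equals $P$. If $Q\in\wSupp(M)$, then $Q$ is a proper $w$-ideal containing $P$ by the first part. Maximality of $P$ among proper $w$-ideals then gives $Q=P$. Hence $\wSupp(M)\subseteq\{P\}$.

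It remains to show $M_P\neq0$, and this is the step that needs care. Suppose instead $M_P=0$. Then, by the previous paragraph, $M_Q=0$ for every $Q\in\wSpec(R)$, and in particular for every maximal $w$-ideal. By Remark~\ref{rem:local-detection} (that is, \cite[Theorem~6.2.13]{WangKim2024}), a module whose localizations at all maximal $w$-ideals vanish is GV-torsion. So $M$ is GV-torsion. Since $M$ is also GV-torsion-free, $M=0$, contradicting that $w$-secondary modules are nonzero. Therefore $M_P\neq0$, and $\wSupp(M)=\{P\}$.

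The only potential subtlety is that maximal $w$-ideals are what the detection theorem uses. Here every $Q\in\wMax(R)$ is in particular a prime $w$-ideal, so the vanishing established above covers all of them.
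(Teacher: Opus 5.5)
Your proof is correct and follows the paper's argument: the inclusion comes from Proposition~\ref{prop:local-secondary}, maximality of $P$ forces the support into $\{P\}$, and $M_P\neq0$ because otherwise $M$ would be both GV-torsion and GV-torsion-free. The paper compresses the last step to a single sentence; your contradiction argument spells out the same reasoning.
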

\begin{proof}
The inclusion is Proposition~\ref{prop:local-secondary}. If $P$ is a maximal $w$-ideal, the set on the right is $\{P\}$. Some maximal $w$-localization of $M$ is nonzero; otherwise $M$ would be both GV-torsion and GV-torsion-free. This proves the last assertion.
\end{proof}

The maximality condition cannot be suppressed. For $R=\Z$ and $M=\Q$, one has $w=d$, and $M$ is a $0$-$w$-secondary $w$-module, but $\wSupp(M)=\Spec(\Z)$. A nonzero $w$-secondary module need not remain nonzero at its radical: for a DVR $V$ with quotient field $F$, the divisible module $F/V$ has radical annihilator zero and $(F/V)_{(0)}=0$.

Let $c(f)$ denote the ideal generated by the coefficients of $f\in R[X]$. Put $S_w=\{f\in R[X]\mid c(f)_w=R\}$. The \emph{$w$-Nagata ring} is $R\Nag=R[X]_{S_w}$, and, for an $R$-module $M$, put $M\Nag=M\otimes_RR\Nag\cong S_w^{-1}M[X]$.

\begin{lemma}\label{lem:nagata}
Let $R$ be a commutative ring.
\begin{enumerate}
\item For $P\in\wSpec(R)$, the ideal $P^*=PR\Nag$ is prime and
$$
 (R\Nag)_{P^*}\cong R[X]_{P[X]}\cong R_P(X),
 \qquad
 (M\Nag)_{P^*}\cong M_P\otimes_{R_P}R_P(X).
$$
Here $R_P(X)=R_P[X]_{R_P[X]\setminus PR_P[X]}$ is the ordinary Nagata ring of the local ring $R_P$.
\item If $s\in S_w$ and $s\,a^nM[X]=0$, then $c(s)a^nM=0$, where $c(s)\in\GV(R)$.
\item For every $R$-module $L$, one has $L\Nag=0$ if and only if $L$ is GV-torsion.
\end{enumerate}
\end{lemma}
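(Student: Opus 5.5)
We prove the three parts of Lemma~\ref{lem:nagata} separately, working throughout with the explicit description $R\Nag=R[X]_{S_w}$ and $M\Nag\cong S_w^{-1}M[X]$.

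For (1), we first show that $P[X]\cap S_w=\emptyset$. If $f\in P[X]$, then $c(f)\subseteq P$, hence $c(f)_w\subseteq P_w=P\neq R$, so $f\notin S_w$. Since $P[X]$ is a prime of $R[X]$ disjoint from $S_w$, the extension $P^*=P[X]R\Nag=PR\Nag$ is prime in $R\Nag$. Localization is transitive, so
$$
(R\Nag)_{P^*}\cong R[X]_{P[X]}.
$$
Next we identify $R[X]_{P[X]}$ with $R_P(X)$. Inverting $R\setminus P\subseteq R[X]\setminus P[X]$ first gives $R_P[X]$, and then we localize at $PR_P[X]$. The module statement follows from $M\Nag\otimes(R\Nag)_{P^*}\cong M\otimes_R R_P(X)\cong M_P\otimes_{R_P}R_P(X)$.

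For (2), we compare coefficients. From $s\,a^nM[X]=0$, take $m\in M$ and look at $s\cdot a^nm$: its coefficients are $s_ia^nm$, and all of them vanish. Thus $c(s)a^nm=0$ for every $m$, that is, $c(s)a^nM=0$. Because $c(s)$ is finitely generated with $c(s)_w=R$, it is a GV-ideal. This is the standard fact that a finitely generated ideal $J$ with $J_w=R$ lies in $\GV(R)$, since $1\in J_w$ means $J'\subseteq J$ for some GV-ideal $J'$, and GV-ideals are closed upward among finitely generated ideals.

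For (3), first suppose $L$ is GV-torsion. For $x\in L$ pick $J=(j_0,\dots,j_k)\in\GV(R)$ with $Jx=0$ and set $f=\sum j_iX^i$. Then $c(f)=J$, so $c(f)_w=R$ and $f\in S_w$; since $fx=0$, we get $x/1=0$ in $L\Nag$. Every element of $S_w^{-1}L[X]$ is a sum of terms $xX^i/s$, so $L\Nag=0$. Conversely, suppose $L\Nag=0$. For $x\in L$ there is $s\in S_w$ with $sx=0$, and then (2) with $a=1$ (the coefficient argument applies to a single element) gives $c(s)x=0$ with $c(s)\in\GV(R)$. So $L$ is GV-torsion.

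The main obstacle is the identification $c(f)_w=R\Leftrightarrow c(f)\in\GV(R)$ for finitely generated ideals, used in both (2) and (3). We will cite \cite{WangKim2024} for this rather than reprove it. Note that (1) needs no Dedekind–Mertens argument, because disjointness from $P[X]$ is immediate from $c(f)\subseteq P$.
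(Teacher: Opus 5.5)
Your proposal is correct and follows the paper's proof essentially step for step. In (1) you use disjointness of $S_w$ from $P[X]$ and transitivity of localization; in (2) you compare coefficients; in (3) you use the content of an annihilating polynomial in one direction and a polynomial with prescribed GV content in the other. The differences are cosmetic: you kill each term $xX^i/s$ separately instead of multiplying GV-ideals to kill a whole polynomial at once, and you spell out why a finitely generated ideal with $c(s)_w=R$ is a GV-ideal.
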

\begin{proof}
No polynomial in $S_w$ belongs to $P[X]$, since its content cannot be contained in $P$. Thus extension of the prime $P[X]$ to $R[X]_{S_w}$ is prime. Successive localization gives $(R\Nag)_{P^*}=R[X]_{P[X]}$. Localizing the coefficients first gives $R_P[X]_{PR_P[X]}=R_P(X)$. Tensor products give the module isomorphism. These are the usual Nagata localization identities; see \cite{WangKim2024,WangKim2015}.

For (2), apply $s\,a^n$ to a constant polynomial $m\in M$ and compare coefficients. For (3), if $L\Nag=0$ and $x\in L$, some $s_x\in S_w$ satisfies $s_xx=0$ in $L[X]$. Thus $c(s_x)x=0$, so $x$ is GV-torsion. Conversely, the finitely many coefficients of a polynomial in $L[X]$ are annihilated by one GV-ideal, obtained by multiplying their annihilating GV-ideals. A polynomial whose content is that ideal belongs to $S_w$ and kills the given polynomial. Hence $L\Nag=0$.
\end{proof}

In Lemma~\ref{lem:nagata}(3), the annihilating polynomial may depend on the element. No single element of $S_w$ is asserted to annihilate the entire module. Also, $R\to R\Nag$ is flat, but need not be faithfully flat in the ordinary module category.

\begin{lemma}\label{lem:secondary-base-change}
Let $(A,\mathfrak m)$ be a local ring, let $N$ be a nonzero classical secondary $A$-module, and put $\mathfrak p=\sqrt{\Ann_A(N)}$. Let $S\subseteq A[X]$ be multiplicative with $S\cap\mathfrak pA[X]=\varnothing$. Suppose $B=A[X]_S$ is local with maximal ideal $\mathfrak pB$. Then $N\otimes_AB$ is either zero or a $\mathfrak pB$-secondary $B$-module. In particular, it is nonzero when $A\to B$ is faithfully flat.
\end{lemma}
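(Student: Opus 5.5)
Put $N_B=N\otimes_AB$ and assume $N_B\neq0$. The plan is to check the classical secondary condition for $N_B$ element by element, splitting according to whether an element lies in the maximal ideal $\mathfrak pB$ or is a unit. Throughout we use that $B$ is a localization of the polynomial ring $A[X]$, so $A\to B$ is flat.

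First consider a unit $b\in B\setminus\mathfrak pB$. Multiplication by $b$ is an automorphism of $N_B$, so $bN_B=N_B$, which is the surjective alternative of the secondary condition.

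Next take $b\in\mathfrak pB$. We show $b$ acts nilpotently on $N_B$. Since $\mathfrak p=\sqrt{\Ann_A(N)}$ and $N$ is secondary, $\mathfrak p$ is prime. Elements of $\mathfrak pB$ are finite sums $\sum p_i\beta_i$ with $p_i\in\mathfrak p$ and $\beta_i\in B$. Finitely many elements of $\mathfrak p$ each have a power killing $N$. Let $\mathfrak a$ be the ideal they generate; then $\mathfrak a^k\subseteq\Ann_A(N)$ for some $k$. Hence $b^k\in\mathfrak a^kB\subseteq\Ann_A(N)B$. Moreover $\Ann_A(N)B$ annihilates $N_B$, because $(aN)\otimes B=0$ for every $a\in\Ann_A(N)$. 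So $b^kN_B=0$. Since every element of $B$ is either a unit or in $\mathfrak pB$, $N_B$ is secondary.

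It remains to identify the radical. The annihilator of $N_B$ is proper because $N_B\ne0$. By the previous step $\mathfrak pB\subseteq\sqrt{\Ann_B(N_B)}$, and no unit can lie in this radical. Since $\mathfrak pB$ is the unique maximal ideal, $\sqrt{\Ann_B(N_B)}=\mathfrak pB$. Thus $N_B$ is $\mathfrak pB$-secondary. The hypothesis $S\cap\mathfrak pA[X]=\varnothing$ is used only to ensure $\mathfrak pB$ is a proper ideal, which is implicit in $B$ being local with that maximal ideal.

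For the final assertion, if $A\to B$ is faithfully flat then $N\neq0$ forces $N\otimes_AB\neq0$, and the dichotomy above lands in the secondary case. The only subtle point is that nilpotence must hold for arbitrary elements of $\mathfrak pB$, not just generators. This is handled by the finite-generation argument: a finite ideal inside a radical has a power inside the annihilator.
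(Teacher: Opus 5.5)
Your proof is correct and follows essentially the paper's argument: units act bijectively on $N_B$, and for $b\in\mathfrak pB$ you take a finitely generated ideal $\mathfrak a\subseteq\mathfrak p$ with $\mathfrak a^k\subseteq\Ann_A(N)$, which gives $b^kN_B=0$. The differences are cosmetic. You write $b=\sum p_i\beta_i$ directly, whereas the paper clears denominators to get $uf\in\mathfrak pA[X]$ and uses the coefficients of $uf$. You also spell out $\sqrt{\Ann_B(N_B)}=\mathfrak pB$, which the paper leaves implicit.
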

\begin{proof}
Put $N_B=N\otimes_AB$ and suppose $N_B\neq0$. An element outside $\mathfrak pB$ is a unit and acts invertibly on $N_B$. If $b=f/s\in\mathfrak pB$, choose $u\in S$ with $uf\in\mathfrak pA[X]$. The finitely many coefficients of $uf$ generate an ideal $J\subseteq\mathfrak p$. Each coefficient acts nilpotently on $N$, so $J^hN=0$ for some $h\geq1$. Therefore $(uf)^hN_B=0$, and invertibility of $u$ and $s$ gives $b^hN_B=0$. This proves the claim.
\end{proof}

The general characterization must keep a uniform nilpotence exponent for each scalar. The Nagata equivalence below is asserted in the P$v$MD setting, where the needed content identity is available.

\begin{theorem}\label{thm:secondary-characterization}
Let $R$ be a commutative ring and $M$ a nonzero GV-torsion-free $R$-module. The following statements are equivalent.
\begin{enumerate}
\item $M$ is $w$-secondary.
\item For every $r\in R$, either $M_Q=rM_Q$ for every $Q\in\wMax(R)$, or there exists $n\geq1$ such that $r^nM_Q=0$ for every $Q\in\wMax(R)$.
\item $P=\wrad{\Ann_R(M)}$ is a prime $w$-ideal, $M/rM$ is GV-torsion for every $r\notin P$, and for every $r\in P$ there exists $n\geq1$ with $r^nM=0$.
\end{enumerate}
If, in addition, $R$ is a P$v$MD, these conditions are equivalent to
\begin{enumerate}
\item[{\em 4}.] $M\Nag$ is a classical secondary $R\Nag$-module.
\end{enumerate}
Under this additional hypothesis, $\sqrt{\Ann_{R\Nag}(M\Nag)}=PR\Nag$. The localization isomorphism in Lemma~\ref{lem:nagata}(1) holds, but its module may be zero. For an arbitrary $R$, condition~\textup{(4)} still implies \textup{(1)}--\textup{(3)}.
\end{theorem}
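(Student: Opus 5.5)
We prove (1)$\Leftrightarrow$(2) and (1)$\Leftrightarrow$(3) over an arbitrary commutative ring, then (4)$\Rightarrow$(1) in general, and finally (1)$\Rightarrow$(4) for a P$v$MD. Two facts are used repeatedly. First, a module is GV-torsion exactly when all its localizations at maximal $w$-ideals vanish (Remark~\ref{rem:local-detection}). Second, a GV-torsion submodule of the GV-torsion-free module $M$ is zero.

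For (1)$\Rightarrow$(2) we apply Lemma~\ref{lem:scalar-localization}. If $M/rM$ is GV-torsion, part (1) gives $M_Q=rM_Q$ for every $Q$. If $r^nM=0$, then $r\in\wrad{\Ann_R(M)}$, and part (2) gives the exponent $n$ uniformly in $Q$. For (2)$\Rightarrow$(1), first suppose $M_Q=rM_Q$ for all $Q\in\wMax(R)$. Then $(M/rM)_Q=M_Q/rM_Q=0$ for all such $Q$, so $M/rM$ is GV-torsion. Otherwise $(r^nM)_Q=r^nM_Q=0$ for all $Q$, so $r^nM$ is GV-torsion and hence $r^nM=0$. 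This is where the uniform exponent is essential.

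For (1)$\Rightarrow$(3), Proposition~\ref{prop:radical} gives $\sqrt{\Ann_R(M)}=\wrad{\Ann_R(M)}=P$, a prime $w$-ideal. If $r\notin P$, no power of $r$ kills $M$, so the $w$-secondary dichotomy forces $M/rM$ to be GV-torsion. If $r\in P$, some power of $r$ kills $M$ by the definition of the radical. The converse (3)$\Rightarrow$(1) is immediate, since every $r$ lies either in $P$ or outside it.

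For (4)$\Rightarrow$(1) over an arbitrary $R$, take $r\in R$. Secondariness of $M\Nag$ gives either $rM\Nag=M\Nag$ or $r^nM\Nag=0$. Since $R\to R\Nag$ is flat, $(M/rM)\Nag\cong M\Nag/rM\Nag$ and $(r^nM)\Nag=r^nM\Nag$. In the first case $(M/rM)\Nag=0$, so Lemma~\ref{lem:nagata}(3) makes $M/rM$ GV-torsion. In the second case $(r^nM)\Nag=0$, so $r^nM$ is GV-torsion and therefore zero.

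For (1)$\Rightarrow$(4) with $R$ a P$v$MD, first note that $M\Nag\neq0$ by Lemma~\ref{lem:nagata}(3), because $M$ is nonzero and GV-torsion-free. Since elements of $S_w$ are units, it suffices to treat $f\in R[X]$. We split according to the content $c(f)$.
\begin{itemize}
\item If $c(f)\subseteq P$, finitely many coefficients each act nilpotently on $M$. Hence $c(f)^hM=0$ for some $h$, so $f^hM\Nag=0$, exactly as in the proof of Lemma~\ref{lem:secondary-base-change}.
\item If $c(f)\not\subseteq P$, we show $fM\Nag=M\Nag$. It suffices to check this after localizing at each maximal ideal of $R\Nag$.
\end{itemize}
For the second case we need that, for a P$v$MD, these maximal ideals are the $Q^*=QR\Nag$ with $Q\in\wMax(R)$. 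By Lemma~\ref{lem:nagata}(1), $(M\Nag)_{Q^*}\cong M_Q\otimes_{R_Q}R_Q(X)$. Since $R_Q$ is a valuation domain, $c(f)R_Q=a R_Q$ for some coefficient $a$ of $f$. Then $f=ag$ with $c(g)=R_Q$, so $g$ is a unit in $R_Q(X)$. We cannot have $a\in P$, for then $c(f)\subseteq PR_Q\cap R=P$, using $P\subseteq Q$. Hence $M/aM$ is GV-torsion and $aM_Q=M_Q$ by Lemma~\ref{lem:scalar-localization}. Therefore $f$ acts surjectively on $(M\Nag)_{Q^*}$.

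This shows $M\Nag$ is classical secondary. The same dichotomy identifies its radical annihilator: the elements acting nilpotently are exactly those $f/s$ with $c(f)\subseteq P$. The main obstacle is the content step just used: identifying $\operatorname{Max}(R\Nag)$ with $\{Q^*\}$ and using the valuation structure of $R_Q$ to factor $f$ as a single coefficient times a unit. Once the two are combined, this equality is read off as $\sqrt{\Ann_{R\Nag}(M\Nag)}=PR\Nag$.
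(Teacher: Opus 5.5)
Your proof is correct and follows essentially the paper's route. The one organizational difference is in (1)$\Rightarrow$(4): you check surjectivity of $f$ on each $(M\Nag)_{Q^*}$ using a $Q$-dependent coefficient generating $c(f)R_Q$, whereas the paper first proves the global identity $fR\Nag=c(f)R\Nag$ and then uses a single coefficient $a\notin P$. There, the phrase ``using $P\subseteq Q$'' needs a one-line justification: if $M_Q=0$ there is nothing to check, and otherwise $P\subseteq Q$ by Proposition~\ref{prop:local-secondary}.
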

\begin{proof}
The equivalence of (1) and (2) follows from Remark~\ref{rem:local-detection}, applied to $M/rM$ and $r^nM$. In the nilpotent case, the GV-torsion submodule $r^nM$ of the GV-torsion-free module $M$ must be zero. Proposition~\ref{prop:radical} gives (1)$\Rightarrow$(3), and (3)$\Rightarrow$(1) is the definition.

Suppose (4) holds. For $r\in R$, either $(M/rM)\Nag=0$ or $(r^nM)\Nag=0$ for some $n\geq1$. Lemma~\ref{lem:nagata}(3) gives respectively that $M/rM$ is GV-torsion or that $r^nM$ is GV-torsion. In the latter case, $r^nM=0$. Thus (1) holds.

For the remaining direction, let $R$ be a P$v$MD and put $T=R\Nag$. We use $fT=c(f)T$ for $f\in R[X]$. Indeed, maximal ideals of $T$ are $QT$ with $Q\in\wMax(R)$, and $T_{QT}=R_Q(X)$. In the valuation domain $R_Q$, the content of $f$ is generated by one coefficient; dividing $f$ by that coefficient gives a primitive polynomial, a unit in $R_Q(X)$. Equality at every maximal ideal proves the identity; see \cite{Kang1989,WangKim2024}.

Assume (1) and set $P=\sqrt{\Ann_R(M)}$. The module $M\Nag$ is nonzero by Lemma~\ref{lem:nagata}(3). If $b=f/s\in PT$, then $f\in P[X]$. Its finitely many coefficients act nilpotently on $M$, so some power of $f$, hence of $b$, annihilates $M\Nag$. If $b\notin PT$, choose a coefficient $a$ of $f$ outside $P$. Since $M/aM$ is GV-torsion, $a(M\Nag)=M\Nag$. The identity $a\in c(f)T=fT$ now gives $f(M\Nag)=M\Nag$. Thus $b$ acts surjectively. This proves (4) and the asserted radical annihilator.
\end{proof}

\begin{corollary}\label{cor:scalar-test}
Let $M$ be a nonzero GV-torsion-free $w$-secondary module, with $P=\sqrt{\Ann_R(M)}$. If $r\notin P$, then $M/rM$ is GV-torsion and multiplication by $r/1$ is an automorphism of $M_P$. If $r\in P$, then $r^nM=0$ and $(r/1)^nM_P=0$ for some $n\geq1$.
\end{corollary}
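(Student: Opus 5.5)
The corollary is a direct assembly of Proposition~\ref{prop:radical}, Definition~\ref{def:w-secondary}, and Lemma~\ref{lem:scalar-localization}. The approach is to split into the two cases $r\notin P$ and $r\in P$. In each case we first record the global statement about $M$ and then localize at $P$. Proposition~\ref{prop:radical} shows that $P$ is a prime $w$-ideal, so $Q=P$ is an admissible prime $w$-ideal in Lemma~\ref{lem:scalar-localization}.

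\emph{Case $r\notin P$.}
\begin{enumerate}
\item Since $P=\sqrt{\Ann_R(M)}$, no power of $r$ annihilates $M$.
\item Definition~\ref{def:w-secondary} therefore forces $M/rM$ to be GV-torsion.
\item Lemma~\ref{lem:scalar-localization}(1), applied with $Q=P$ and $r\notin P$, shows that multiplication by $r/1$ is an automorphism of $M_P$.
\end{enumerate}
Concretely, the automorphism comes from two facts. Surjectivity holds because $(M/rM)_P=0$, by Remark~\ref{rem:local-detection}. Injectivity holds because $r/1$ is a unit of $R_P$.

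\emph{Case $r\in P$.} By definition of the radical there is $n\geq1$ with $r^n\in\Ann_R(M)$, that is, $r^nM=0$. Localizing at $P$ gives $(r/1)^nM_P=0$. Alternatively, since $P=\wrad{\Ann_R(M)}$ by Proposition~\ref{prop:radical}, Lemma~\ref{lem:scalar-localization}(2) yields the same conclusion with the same exponent.

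I expect no real obstacle. The only point needing care is that $M_P$ may be zero, as in the $F/V$ example above. The statement asserts nothing about nonvanishing, so the automorphism and nilpotence claims remain valid, trivially so when $M_P=0$. Accordingly, I will not invoke Proposition~\ref{prop:local-secondary}, which assumes $M_P\neq0$.
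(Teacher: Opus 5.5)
Your proposal is correct and follows essentially the paper's route. The paper simply cites Theorem~\ref{thm:secondary-characterization}(3), which is itself Proposition~\ref{prop:radical} plus the definition, together with Lemma~\ref{lem:scalar-localization}; you unpack (3) directly, and your caution about $M_P$ possibly being zero is well placed.
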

\begin{proof}
Apply Theorem~\ref{thm:secondary-characterization}(3) and Lemma~\ref{lem:scalar-localization}.
\end{proof}

\begin{remark}\label{rem:classical-secondary}
{\em Every GV-torsion-free classical secondary module is $w$-secondary. Conversely, each nonzero localization of a $w$-secondary module at a prime $w$-ideal is classically secondary, by Proposition~\ref{prop:local-secondary}. This does not imply that there is a unique nonzero localization or that localization at the radical is nonzero.
}
\end{remark}

\begin{theorem}\label{thm:envelope-maximal-localization}
Let $R$ be a commutative ring and $M$ a nonzero GV-torsion-free module. Then $M$ is $w$-secondary if and only if $M_w$ is $w$-secondary, and $\Ann_R(M)=\Ann_R(M_w)$.

Moreover, for a fixed $P\in\wMax(R)$, the following are equivalent.
\begin{enumerate}
\item $M$ is $w$-secondary and $\sqrt{\Ann_R(M)}=P$.
\item $M_w$ is $w$-secondary and $\sqrt{\Ann_R(M_w)}=P$.
\item $M_P$ is a nonzero $PR_P$-secondary module and $M_Q=0$ for every $Q\in\wMax(R)\setminus\{P\}$.
\item $\wSupp(M)=\{P\}$ and $M_P$ is a $PR_P$-secondary module.
\end{enumerate}
\end{theorem}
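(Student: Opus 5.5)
The first step is the annihilator identity $\Ann_R(M)=\Ann_R(M_w)$. Since $M\subseteq M_w$, we have $\Ann_R(M_w)\subseteq\Ann_R(M)$. For the reverse inclusion, let $rM=0$ and $x\in M_w$, and choose $J\in\GV(R)$ with $Jx\subseteq M$. Then $J(rx)=r(Jx)=0$. Because $M_w$ is GV-torsion-free (it sits in $E(M)$ as a $w$-module), this gives $rx=0$.

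For the first equivalence I would use Theorem~\ref{thm:secondary-characterization}(2). It tests $w$-secondarity only through the conditions $M_Q=rM_Q$ and $r^nM_Q=0$ at $Q\in\wMax(R)$. By \cite[Theorem~6.2.14]{WangKim2024} we have $(M_w)_Q\cong M_Q$, and this isomorphism is $R$-linear. It is induced by the inclusion $M\to M_w$, whose cokernel is GV-torsion and therefore vanishes at each $Q$ by Remark~\ref{rem:local-detection}. Both conditions are therefore identical for $M$ and $M_w$. Also $M_w\neq0$, and it is GV-torsion-free. Hence $M$ is $w$-secondary if and only if $M_w$ is. Together with the annihilator identity this gives (1)$\Leftrightarrow$(2).

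For (1)$\Rightarrow$(3), Lemma~\ref{lem:support} gives $\wSupp(M)=\{P\}$, so $M_P\neq0$ and $M_Q=0$ for the other maximal $w$-ideals. Proposition~\ref{prop:local-secondary} then shows that $M_P$ is secondary with radical $PR_P$. The step (3)$\Rightarrow$(4) needs $M_{\mathfrak q}=0$ for every nonmaximal prime $w$-ideal $\mathfrak q$. Such a $\mathfrak q$ lies in some $Q\in\wMax(R)$, and $M_{\mathfrak q}$ is a localization of $M_Q$. If $Q\neq P$, then $M_Q=0$ and we are done. If $Q=P$ and $\mathfrak q\subsetneq P$, pick $r\in P\setminus\mathfrak q$. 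Then $r/1$ is nilpotent on $M_P$, because $M_P$ is $PR_P$-secondary, and it is a unit in $R_{\mathfrak q}$, so $M_{\mathfrak q}=0$. The step (4)$\Rightarrow$(3) is immediate.

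The main work is (3)$\Rightarrow$(1), which I would do via Theorem~\ref{thm:secondary-characterization}(2) and where I expect the main obstacle to lie. Fix $r\in R$ and consider two cases.

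\emph{Case $r\notin P$.} Here $r/1$ is a unit of $R_P$, so $rM_P=M_P$. At $Q\neq P$ both sides are zero, so $rM_Q=M_Q$ for every $Q\in\wMax(R)$.

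\emph{Case $r\in P$.} Secondarity of $M_P$ with radical $PR_P$ gives $n$ with $(r/1)^nM_P=0$. The same $n$ works at all other $Q$, since those localizations are zero.

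So condition (2) of Theorem~\ref{thm:secondary-characterization} holds and $M$ is $w$-secondary. The delicate point is that the nilpotence exponent must be uniform over all $Q$. This works here precisely because the support is the singleton $\{P\}$.

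It remains to identify the radical. By Proposition~\ref{prop:local-secondary}, $\sqrt{\Ann_{R_P}(M_P)}$ is the extension of $\sqrt{\Ann_R(M)}$, and it equals $PR_P$ by hypothesis. Since $\sqrt{\Ann_R(M)}$ is a prime $w$-ideal contained in $P$ (Proposition~\ref{prop:radical}), it is the contraction of $PR_P$, namely $P$. This completes (3)$\Rightarrow$(1).
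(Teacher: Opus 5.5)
Your proof is correct and follows the paper's argument. You use the same annihilator argument via a GV-ideal carrying $x$ into $M$, the same route to the first equivalence through the maximal-$w$-localization criterion and $(M_w)_Q\cong M_Q$, and the same two-case check for (3)$\Rightarrow$(1), where the uniform exponent comes from the singleton support. The only difference is the order of implications. The paper closes the cycle (1)$\Rightarrow$(4)$\Rightarrow$(3)$\Rightarrow$(1), reading (1)$\Rightarrow$(4) directly off Lemma~\ref{lem:support}, so your separate argument for (3)$\Rightarrow$(4) through nonmaximal prime $w$-ideals is correct but not needed.
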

\begin{proof}
The injective hull of a GV-torsion-free module is GV-torsion-free \cite[Theorem~6.2.2]{WangKim2024}; hence so is $M_w$. If $aM=0$ and $x\in M_w$, choose $J\in\GV(R)$ with $Jx\subseteq M$. Then $Jax=0$, whence $ax=0$. This proves equality of the annihilators. Also, $(M_w)_Q\cong M_Q$ at every maximal $w$-ideal \cite[Theorem~6.2.14]{WangKim2024}. Theorem~\ref{thm:secondary-characterization}(2) now proves the first equivalence.

Consequently (1) and (2) are equivalent. Lemma~\ref{lem:support} and Proposition~\ref{prop:local-secondary} give (1)$\Rightarrow$(4), and (4)$\Rightarrow$(3) is immediate. Assume (3). If $r\notin P$, all maximal $w$-localizations of $M/rM$ vanish. If $r\in P$, some $r^n$ annihilates $M_P$, and it also annihilates every other maximal $w$-localization, which is zero. Thus $r^nM=0$. This proves $w$-secondaryness. The nonzero $PR_P$-secondary localization ensures that its global radical annihilator is exactly $P$, proving (1).
\end{proof}

We next give the $w$-analogue of the elementary bounded-primary-module criterion \cite[Lemma~2.1]{Pakyari2020}.
\begin{lemma}\label{lem:bounded-secondary}
Let $R$ be a commutative ring and let $P$ be a nonzero maximal $w$-ideal. If $M$ is a nonzero GV-torsion-free module with $P^nM=0$ for some $n\geq1$, then $M$ is $w$-secondary.
\end{lemma}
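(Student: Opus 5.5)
We verify the $w$-secondary condition directly: for each $r\in R$, we show either $r^nM=0$ for some $n$ or $M/rM$ is GV-torsion. The split is according to whether $r\in P$.

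\emph{Case $r\in P$.} Then $r^n\in P^n$, so $r^nM\subseteq P^nM=0$. This gives the nilpotent alternative with the given exponent $n$.

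\emph{Case $r\notin P$.} Since $P$ is a maximal $w$-ideal, the $w$-ideal $(P+Rr)_w$ properly contains $P$ and so equals $R$. Hence $(P^n+Rr)_w=R$ as well. Indeed, $\sqrt{P^n+Rr}\supseteq P+Rr$, and any maximal $w$-ideal containing $(P^n+Rr)_w$ would contain $P^n$, hence $P$ (it is prime), and also $r$, which is impossible. Because $(P^n+Rr)_w=R$, there is $J\in\GV(R)$ with $J\subseteq P^n+Rr$, using the description $I_w=\{x\mid Jx\subseteq I \text{ for some } J\in\GV(R)\}$ with $x=1$. Then
$$
JM\subseteq P^nM+rM=rM.
$$
So $J$ annihilates $M/rM$, and $M/rM$ is GV-torsion.

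Alternatively, the second case can be checked locally via Remark~\ref{rem:local-detection}. At $Q\in\wMax(R)$ with $Q\neq P$, we have $P^n\not\subseteq Q$, so $M_Q=0$. At $Q=P$, the element $r$ is a unit in $R_P$, so $M_P=rM_P$. Thus every maximal $w$-localization of $M/rM$ vanishes, and $M/rM$ is GV-torsion.

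Since $M$ is nonzero and GV-torsion-free by hypothesis, these two cases show that $M$ is $w$-secondary. The only nontrivial step is showing that $(P^n+Rr)_w=R$ when $r\notin P$, which uses maximality and primality of $P$ as a $w$-ideal. The hypothesis that $P$ is nonzero is not actually needed in this argument; it only guarantees that $\sqrt{\Ann_R(M)}=P$, which is consistent with Theorem~\ref{thm:envelope-maximal-localization}.
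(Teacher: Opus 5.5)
Your proof is correct and is essentially the paper's argument: the same split on whether $r\in P$, and for $r\notin P$ a GV-ideal $J\subseteq P^n+rR$ giving $JM\subseteq rM$. There are two small differences. The paper gets $(P^n+rR)_w=R$ from $R=((P+rR)^n)_w\subseteq(P^n+rR)_w$, whereas you use a maximal-$w$-ideal argument; that difference is cosmetic. You are also right that $P\neq0$ is never used, but it is not needed for $\sqrt{\Ann_R(M)}=P$ either, since that equality follows from maximality alone.
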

\begin{proof}
If $r\in P$, then $r^nM=0$. If $r\notin P$, maximality gives $(P+rR)_w=R$. Hence
$$
 R=((P+rR)^n)_w\subseteq(P^n+rR)_w\subseteq R.
$$
Choose $J\in\GV(R)$ with $J\subseteq P^n+rR$. Then $JM\subseteq rM$, so $M/rM$ is GV-torsion.
\end{proof}

The radical version does not require a uniformly annihilating power of $P$.
\begin{lemma}\label{lem:maximal-radical}
Let $R$ be an integral domain and $M$ a nonzero $w$-module. If $P=\sqrt{\Ann_R(M)}$ is a maximal $w$-ideal, then $M$ is $w$-secondary.
\end{lemma}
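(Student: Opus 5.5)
The plan is to verify the two alternatives of Definition~\ref{def:w-secondary} directly, splitting on whether $r$ lies in $P$. In the case $r\notin P$ I will check the GV-torsion condition locally, at every maximal $w$-ideal, using Remark~\ref{rem:local-detection}. Put $I=\Ann_R(M)$. Since $M$ is a $w$-module, it is GV-torsion-free. By hypothesis it is nonzero, so the standing assumptions of the definition hold.

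\emph{Case $r\in P$.} Since $P=\sqrt I$, some power $r^n$ lies in $I$. Hence $r^nM=0$, and nothing more is needed.

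\emph{Case $r\notin P$.} We must show that $M/rM$ is GV-torsion. By Remark~\ref{rem:local-detection} (that is, \cite[Theorem~6.2.13]{WangKim2024}), it suffices to show $(M/rM)_Q=0$ for every $Q\in\wMax(R)$. Localization is exact, so $(M/rM)_Q\cong M_Q/rM_Q$.
\begin{itemize}
\item[(a)] If $Q=P$, then $r/1$ is a unit of $R_P$. Therefore $rM_P=M_P$, and the quotient vanishes.
\item[(b)] If $Q\neq P$, I claim $I\not\subseteq Q$. Otherwise, since $Q$ is prime, $P=\sqrt I\subseteq Q$. Because $P$ is a maximal $w$-ideal and $Q$ is a proper $w$-ideal, this forces $P=Q$, a contradiction. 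So choose $a\in I\setminus Q$. Then $a/1$ is a unit of $R_Q$ that annihilates $M_Q$, whence $M_Q=0$, and in particular $M_Q/rM_Q=0$.
\end{itemize}

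Combining (a) and (b), all maximal $w$-localizations of $M/rM$ vanish. Hence $M/rM$ is GV-torsion, and $M$ is $w$-secondary.

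I do not expect a real obstacle. The only point needing care is step (b): one must use that a prime containing $I$ contains its radical, together with the maximality of $P$ among proper $w$-ideals. This step is where the hypothesis $P\in\wMax(R)$ enters, and it is exactly what fails in the example $M=\Q$ over $\Z$ discussed after Lemma~\ref{lem:support}. The argument does not seem to need that $R$ is a domain, nor that $M$ is a $w$-module rather than merely GV-torsion-free.
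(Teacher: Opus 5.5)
Your proof is correct and follows essentially the same route as the paper. The $r\in P$ case is handled by a power of $r$. For $r\notin P$, you show that $M/rM$ has zero localization at every maximal $w$-ideal: $r$ acts invertibly at $P$, and $M_Q=0$ for $Q\neq P$. Your only deviation is minor: you pick $a\in\Ann_R(M)\setminus Q$ directly, while the paper picks $x\in P\setminus Q$ and takes a power. Your closing remark is also right: the argument uses only GV-torsion-freeness, and it works over an arbitrary commutative ring.
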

\begin{proof}
For $r\in P$, some power of $r$ annihilates $M$. If $r\notin P$, multiplication by $r$ is invertible on $M_P$. For $Q\neq P$ maximal $w$, choose $x\in P\setminus Q$. A power of $x$ annihilates $M$, so $M_Q=0$. Thus every maximal $w$-localization of $M/rM$ is zero, and $M/rM$ is GV-torsion.
\end{proof}

\begin{corollary}\label{cor:secondary-submodule}
Let $R$ be an integral domain and $M$ a nonzero $w$-module with $P=\sqrt{\Ann_R(M)}\in\wMax(R)$. Every nonzero $w$-submodule $N\subseteq M$ is $w$-secondary with $\sqrt{\Ann_R(N)}=P$.
\end{corollary}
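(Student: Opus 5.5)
The plan is to reduce the statement to Lemma~\ref{lem:maximal-radical}. Everything hinges on showing that $\sqrt{\Ann_R(N)}=P$, and the two inclusions come from different sources.

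First, $N$ is a nonzero $w$-module, hence GV-torsion-free, so Lemma~\ref{lem:maximal-radical} applies to $N$ once its radical annihilator is known to be the maximal $w$-ideal $P$. Since $N\subseteq M$, we have $\Ann_R(M)\subseteq\Ann_R(N)$, and therefore $P\subseteq\sqrt{\Ann_R(N)}$.

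Next, $\sqrt{\Ann_R(N)}$ is a proper ideal because $N\neq0$. The point is to see that it is a $w$-ideal, or at least that it lies inside some maximal $w$-ideal. For this I would argue as in the first paragraph of the proof of Proposition~\ref{prop:radical}:
\begin{itemize}
\item If $Jr\subseteq\Ann_R(N)$ with $J\in\GV(R)$, then $J(rN)=0$, and GV-torsion-freeness of $N$ gives $rN=0$. Thus $\Ann_R(N)$ is a $w$-ideal.
\item For the radical, if $Jx\subseteq\sqrt{\Ann_R(N)}$, finite generation of $J$ gives $(xJ)^h\subseteq\Ann_R(N)$ for some $h\geq1$.
\item Since $J^h\in\GV(R)$, GV-torsion-freeness again gives $x^hN=0$, so $x\in\sqrt{\Ann_R(N)}$.
\end{itemize}
Hence $\sqrt{\Ann_R(N)}$ is a proper $w$-ideal containing the maximal $w$-ideal $P$. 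It is therefore contained in some maximal $w$-ideal, which must equal $P$ by maximality. This gives equality.

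Finally, Lemma~\ref{lem:maximal-radical} applied to the nonzero $w$-module $N$ with $\sqrt{\Ann_R(N)}=P\in\wMax(R)$ shows that $N$ is $w$-secondary. The only step requiring care is the $w$-closure of $\sqrt{\Ann_R(N)}$. It is routine but must be done without assuming secondariness of $N$, since that is what we are proving. The argument above uses only GV-torsion-freeness, so there is no circularity.
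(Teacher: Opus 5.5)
Your proposal is correct and follows essentially the same route as the paper. The paper likewise uses only GV-torsion-freeness of $N$ to get $P\subseteq\sqrt{\Ann_R(N)}\subsetneq R$ with $\sqrt{\Ann_R(N)}$ a $w$-ideal, as in the proof of Proposition~\ref{prop:radical}. It then uses maximality of $P$ to force equality and applies Lemma~\ref{lem:maximal-radical}.
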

\begin{proof}
For every nonzero GV-torsion-free module, its annihilator and the radical of its annihilator are $w$-ideals, by the corresponding parts of the proof of Proposition~\ref{prop:radical}. Thus $P\subseteq\sqrt{\Ann_R(N)}\subsetneq R$, and maximality gives equality. Apply Lemma~\ref{lem:maximal-radical}.
\end{proof}

The next two results are the $w$-analogues of \cite[Propositions~2.4--2.5]{Pakyari2020}.
\begin{proposition}\label{prop:localized-action}
Let $R$ be an integral domain and $M$ a nonzero $w$-module with $P=\sqrt{\Ann_R(M)}\in\wMax(R)$. Then $M$ admits a canonical $R_P$-module structure.
\end{proposition}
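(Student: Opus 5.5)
The natural candidate for the canonical structure is the canonical map $M\to M_P$, $x\mapsto x/1$: I will show it is an isomorphism of $R$-modules and then transport the $R_P$-structure of $M_P$ back to $M$. Concretely, the structure will be $(r/s)x=y$, where $y$ is the unique element with $sy=rx$. The whole proof therefore reduces to showing that multiplication by each $s\notin P$ is bijective on $M$. Once that holds, the formula is well defined and gives an $R_P$-module structure extending the $R$-action. The structure is canonical because any $R_P$-structure extending the $R$-action must make $1/s$ the inverse of multiplication by $s$.

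\emph{Injectivity of $s$.} Let $s\notin P$ and suppose $sx=0$ with $x\in M$. I claim $x/1=0$ in $M_Q$ for every $Q\in\wMax(R)$.
\begin{enumerate}
\item At $Q=P$, the element $s/1$ is a unit, so $x/1=0$ in $M_P$.
\item At $Q\neq P$, pick $a\in P\setminus Q$. Some power $a^k$ kills $M$, so $M_Q=0$; this is the argument in the proof of Lemma~\ref{lem:maximal-radical}.
\end{enumerate}
Since the diagonal map $M\to\prod_{Q}M_Q$ is injective for GV-torsion-free $M$ (stated in the introduction), we get $x=0$.

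\emph{Surjectivity of $s$.} By Lemma~\ref{lem:maximal-radical}, $M$ is $w$-secondary, so $M/sM$ is GV-torsion. I will then show that $sM$ is a $w$-submodule of $M$, which forces $sM=M$. Note that $sM\cong M$ by injectivity, so $sM$ is a $w$-module. In addition $M/sM$ must be GV-torsion-free, and this is the main obstacle, which I will handle via Ext.

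Apply $\Hom(R/J,-)$ to $0\to M\xrightarrow{s}M\to M/sM\to0$, for $J\in\GV(R)$. Since $M$ is GV-torsion-free, $\Hom(R/J,M)=0$, and since $M$ is a $w$-module, $\Ext^1(R/J,M)=0$. The long exact sequence then gives $\Hom(R/J,M/sM)=0$ for every $J\in\GV(R)$, that is, $M/sM$ is GV-torsion-free. Being both GV-torsion and GV-torsion-free, $M/sM=0$.

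Alternatively, one can argue directly. Given $y\in M$, choose $J\in\GV(R)$ with $Jy\subseteq sM$. Injectivity of $s$ yields a well-defined homomorphism $J\to M$, $j\mapsto s^{-1}(jy)$. The condition $\Ext^1(R/J,M)=0$ extends it to $R$, which gives $z\in M$ with $J(sz-y)=0$. GV-torsion-freeness then forces $y=sz$.
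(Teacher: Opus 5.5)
Your proof is correct and follows essentially the same route as the paper's. Both show that multiplication by $s\notin P$ on $M$ has zero kernel, using vanishing at every maximal $w$-ideal together with GV-torsion-freeness, and has a cokernel $M/sM$ that is both GV-torsion and GV-torsion-free. The one difference is that the paper cites an external result for the GV-torsion-freeness of the quotient of a $w$-module by a $w$-submodule, whereas you derive it directly from $\Ext^1_R(R/J,M)=0$ via the long exact sequence, which makes the argument self-contained.
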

\begin{proof}
Fix $s\notin P$. At $P$, multiplication by $s$ is invertible, and at each other maximal $w$-ideal both source and target are zero, by the proof of Lemma~\ref{lem:maximal-radical}. Thus multiplication by $s$ on $M$ has GV-torsion kernel and cokernel. Its kernel is zero because $M$ is GV-torsion-free. Its image is isomorphic to the $w$-module $M$, hence is a $w$-submodule of $M$. The quotient by such a submodule is GV-torsion-free \cite[Theorem~6.1.17]{WangKim2024}; being also GV-torsion, it is zero. Thus every $s\notin P$ acts bijectively, giving the canonical $R_P$-action.
\end{proof}

\begin{proposition}\label{prop:localization-isomorphism}
Under the hypotheses of Proposition~\ref{prop:localized-action}, the canonical map $M\to M_P$ is an isomorphism. In particular, $M\cong(M_P)_w=M_P$.
\end{proposition}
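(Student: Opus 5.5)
We take $R$ an integral domain and $M$ a nonzero $w$-module with $P=\sqrt{\Ann_R(M)}\in\wMax(R)$, and we prove that $M\to M_P$, $x\mapsto x/1$, is an isomorphism.

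The plan is to use the $R_P$-structure from Proposition~\ref{prop:localized-action}. Every $s\in R\setminus P$ acts bijectively on $M$. Consequently $S^{-1}$ applied to $M$, with $S=R\setminus P$, changes nothing. We check bijectivity of $x\mapsto x/1$ directly.

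For injectivity, suppose $x/1=0$ in $M_P$. Then $sx=0$ for some $s\notin P$. Since multiplication by $s$ is injective on $M$ (its kernel was shown to be zero in the proof of Proposition~\ref{prop:localized-action}), we get $x=0$. An alternative route also works: $M_Q=0$ for every $Q\in\wMax(R)\setminus\{P\}$ by the proof of Lemma~\ref{lem:maximal-radical}. Membership in $0^{w,M}$ is detected at maximal $w$-ideals, so $x$ lies in $0^{w,M}=0$, because $M$ is GV-torsion-free.

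For surjectivity, take $x/s\in M_P$. Since $s$ acts bijectively on $M$, there is a unique $y\in M$ with $sy=x$. Then $y/1=x/s$, because $s(y)-x=0$ already in $M$.

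Hence $M\cong M_P$ as $R$-modules, and in fact as $R_P$-modules, where $M$ carries the canonical action. For the final claim, note that $M_P$ is an $R_P$-module, hence a $w$-module by Remark~\ref{rem:local-detection}. Therefore $(M_P)_w=M_P$. Alternatively, $(M_w)_P\cong M_P$ together with $M=M_w$ gives the same conclusion.

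The only step needing care is making sure the bijectivity of each $s\notin P$ is available. That is exactly Proposition~\ref{prop:localized-action}, and it uses that $M$ is a $w$-module, which lets the image $sM$ be a $w$-submodule with GV-torsion-free quotient. Granting that, everything above is routine.
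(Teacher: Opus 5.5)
Your proof is correct and follows the paper's route. Proposition~\ref{prop:localized-action} makes every $s\notin P$ act bijectively, so $x\mapsto x/1$ is an isomorphism, and $M_P$ is a $w$-module because it is an $R_P$-module; you simply write out the injectivity and surjectivity checks that the paper leaves implicit. (Your ``alternative'' justification of $(M_P)_w=M_P$ really rests on $M\cong M_P$ with $M$ a $w$-module, not on $(M_w)_P\cong M_P$, but your primary argument already suffices.)
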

\begin{proof}
Proposition~\ref{prop:localized-action} makes the canonical localization map an isomorphism. The module $M_P$ is a $w$-module over $R$ by \cite[Proposition~1.5]{WangKim2015}, so $(M_P)_w=M_P$.
\end{proof}

\begin{lemma}\label{lem:direct-sums}
Let $R$ be a commutative ring and $M_1,\ldots,M_n$ GV-torsion-free modules. Then
$$
 (M_1\oplus\cdots\oplus M_n)_w
 \cong(M_1)_w\oplus\cdots\oplus(M_n)_w.
$$
The same statement holds for an arbitrary family of GV-torsion-free modules.
\end{lemma}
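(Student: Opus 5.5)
The plan is to realize the $w$-envelope of the direct sum inside the injective hull and compare it with the product of envelopes. Put $M=\bigoplus_i M_i$; for the arbitrary-family version, $i$ ranges over an index set $\Lambda$. Each $M_i$ is GV-torsion-free, and $\tor_{\GV(R)}$ commutes with direct sums, so $M$ is GV-torsion-free and $M_w$ is defined. Choose injective hulls $E(M_i)$. For a finite family, $E(M)=\bigoplus_i E(M_i)$. For an arbitrary family, I will avoid assuming the direct sum of injective hulls is injective, which would need noetherianity. Instead I work inside $E=\prod_i E(M_i)$: this is injective and contains $M$, so it contains a copy of $E(M)$ as the injective hull of $M$. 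Moreover $E$ is GV-torsion-free, because each $E(M_i)$ is \cite[Theorem~6.2.2]{WangKim2024}.

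The key observation is that the envelope does not depend on the injective module chosen. If $M\subseteq E'$ with $E'$ GV-torsion-free, then
\[
\{x\in E'\mid Jx\subseteq M \text{ for some } J\in\GV(R)\}=M_w.
\]
To prove this, I note that the set on the left lies in any injective hull of $M$ contained in $E'$. Indeed, $E'=E(M)\oplus C$, and if $x=e+c$ with $Jx\subseteq M$, then $Jc=0$, which forces $c=0$ by GV-torsion-freeness. So I may compute $M_w$ inside $E$ (or inside $\bigoplus_i E(M_i)$ in the finite case).

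Now take $x=(x_i)\in E$ with $Jx\subseteq M$ for some $J\in\GV(R)$. Then $Jx_i\subseteq M_i$ for every $i$, so each $x_i\in(M_i)_w$. Moreover, since $J$ is finitely generated, $J=(a_1,\dots,a_k)$, each $a_jx$ lies in the direct sum $M$ and so has finite support. Hence $Jx_i=0$ for all $i$ outside a finite set, which gives $x_i=0$ there by GV-torsion-freeness. Therefore $x\in\bigoplus_i(M_i)_w$.

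For the converse, take $x=(x_i)$ with finitely many nonzero coordinates, each satisfying $J_ix_i\subseteq M_i$. Then the product $J=\prod_i J_i$ over these finitely many indices is a GV-ideal \cite[Proposition~6.1.9]{WangKim2024}, and $Jx\subseteq M$. This yields equality inside $E$, which is even stronger than an isomorphism.

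The main obstacle is the infinite case. There one must handle both the injective-hull issue and the finite-support argument, and both are settled as described above: the hull issue by working inside $\prod_i E(M_i)$, and the finite-support issue by the finite generation of GV-ideals.
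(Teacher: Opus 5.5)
Your proof is correct. For an arbitrary family it takes a different route from the paper.

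The paper handles the finite case as you do: it uses $E(\bigoplus_i M_i)=\bigoplus_i E(M_i)$, argues coordinatewise, and takes a product of GV-ideals. For an arbitrary family, the paper argues structurally instead:
\begin{enumerate}
\item A direct sum of $w$-modules is a $w$-module. A homomorphism from a finitely generated GV-ideal into the sum lands in a finite subsum, hence extends to $R$, which gives $\Ext_R^1(R/J,-)=0$.
\item Therefore $\bigoplus_i M_i\subseteq\bigoplus_i(M_i)_w$ is an embedding into a $w$-module with GV-torsion cokernel.
\item The paper then cites the characterization of the $w$-envelope as such a module.
\end{enumerate}

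You instead compute the envelope directly inside the injective module $\prod_i E(M_i)$. Finite generation of GV-ideals enters through your finite-support argument rather than through the Ext criterion. Your version is uniform in the index set. It sidesteps the possible failure of $\bigoplus_i E(M_i)$ to be injective for infinite families over non-noetherian rings. It also gives an actual equality over $M$ without appealing to the envelope characterization. The paper's version gives, as a by-product, that arbitrary direct sums of $w$-modules are $w$-modules.

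One small correction: you state your ``key observation'' for any GV-torsion-free $E'\supseteq M$. However, the argument uses the splitting $E'=E(M)\oplus C$, which requires $E'$ to be injective. As stated the claim is false: take $E'=M$ with $M$ not a $w$-module. You only apply it to $\prod_i E(M_i)$, which is injective, so just add that hypothesis.
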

\begin{proof}
For a finite family, $E(\bigoplus_iM_i)=\bigoplus_iE(M_i)$ \cite{Lam2012}. If $x_iJ_i\subseteq M_i$ with $J_i\in\GV(R)$, the GV-ideal $\prod_iJ_i$ carries $(x_1,\ldots,x_n)$ into $\bigoplus_iM_i$. Conversely, an annihilating GV-ideal for the tuple works in every coordinate. This proves the finite assertion.
For the arbitrary assertion, first note that a direct sum of $w$-modules is a $w$-module. It is GV-torsion-free, and any homomorphism from a finitely generated GV-ideal to the sum has image in a finite subsum and therefore extends to $R$. This is the defining vanishing of $\Ext_R^1(R/J,-)$. Now $\bigoplus_iM_i$ embeds in the $w$-module $\bigoplus_i(M_i)_w$ with GV-torsion cokernel. The latter is its $w$-envelope \cite{WangKim2024,WangKim2015}.
\end{proof}

A classical secondary representation is an expression $M=N_1+\cdots+N_t$ with each $N_i$ secondary. It is \emph{minimal} if the radical annihilators are pairwise distinct and no component is redundant. The radicals form the set $\Att(M)$ of attached prime ideals, independently of the chosen minimal classical representation \cite{Macdonald1973}. A module is \emph{strongly representable} if it is a finite direct sum of secondary submodules. Clearly every strongly representable module is representable.

For a hereditary torsion theory $\tau$ with Gabriel filter $L(\tau)$, the $\tau$-closure of $N\subseteq M$ is
$$
 \Cl_\tau^M(N)=\{x\in M\mid Ix\subseteq N\text{ for some }I\in L(\tau)\}.
$$
The submodule is {\em $\tau$-closed} if it equals its closure, and {\em $\tau$-dense} if its closure is $M$. Equivalently, these conditions mean that $M/N$ is $\tau$-torsion-free or $\tau$-torsion, respectively. For $\tau_w$, this is the relative $w$-closure already defined; if $M$ is GV-torsion-free, it agrees with the embedded copy of $N_w\cap M$. In particular a GV-torsion-free module and a $\tau_w$-dense submodule have the same $w$-envelope \cite[Lemma~2.1]{KimLimZhou2019}.

Following the torsion-theoretic convention of \cite{Garcia2013}, a \emph{$w$-secondary representation} of a $w$-module $M$ is
$$
 M=(N_1+\cdots+N_n)_w,
$$
where each $N_i$ is $w$-secondary. Such a representation is called \emph{minimal} here when the radicals $P_i=\sqrt{\Ann_R(N_i)}$ are pairwise distinct; $M$ is then \emph{$w$-representable}. If $M=(N_1\oplus\cdots\oplus N_n)_w$ with distinct radicals, it is \emph{strongly $w$-representable}. This convention for minimality does not assert irredundancy; no uniqueness of the displayed radicals is used below. We refer to these ideals as the radical annihilators of the components, not as associated primes of $M$. In particular, $N_j\cap\sum_{i\neq j}N_i=0$ for all $j$ gives a strong $w$-representation.

Two ideals $I,J$ are \emph{$w$-comaximal} if $(I+J)_w=R$. The following is the $w$-analogue of \cite[Lemma~4.5]{Pakyari2020}.
\begin{proposition}\label{prop:comaximal}
Let $R$ be a commutative ring and let $M$ be a $w$-module with a $w$-secondary representation whose radical annihilators are pairwise $w$-comaximal. Then the displayed components have direct sum and $M$ is strongly $w$-representable.
\end{proposition}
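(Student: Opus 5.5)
We must show: if $M=(N_1+\cdots+N_n)_w$ with each $N_i$ $w$-secondary and $P_i=\sqrt{\Ann_R(N_i)}$ pairwise $w$-comaximal, then the sum $N_1+\cdots+N_n$ is direct. Once directness holds, $M=(N_1\oplus\cdots\oplus N_n)_w$. The radicals are then pairwise distinct: if $P_i=P_j$ with $i\neq j$, then $(P_i+P_j)_w=(P_i)_w=P_i\neq R$, since $P_i$ is a proper prime $w$-ideal by Proposition~\ref{prop:radical}. So $M$ is strongly $w$-representable. The whole task therefore reduces to proving $N_j\cap\sum_{i\neq j}N_i=0$ for each $j$, as noted just before the statement.

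\emph{Step 1.} Fix $j$. We show that $P_j$ and $I=\prod_{i\neq j}P_i$ are $w$-comaximal. For each $i\neq j$ we have $(P_j+P_i)_w=R$, so we may pick $J_i\in\GV(R)$ with $J_i\subseteq P_j+P_i$. The product $\prod_{i\neq j}J_i$ is a GV-ideal, by the same multiplicativity used in the proof of Lemma~\ref{lem:direct-sums}. Expanding the product shows $\prod_{i\neq j}J_i\subseteq P_j+I$. Hence $(P_j+I)_w=R$, and we choose $J\in\GV(R)$ with $J\subseteq P_j+I$.

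\emph{Step 2.} We replace these radicals by annihilators. $J$ is finitely generated, so each generator $g$ can be written $g=a_g+b_g$ with $a_g\in P_j$ and $b_g\in I$. Only finitely many elements of $P_j$ and of the $P_i$ occur, and each acts nilpotently on the corresponding $N$ by Theorem~\ref{thm:secondary-characterization}(3). Raising to a high power $h$, we get $J^h\subseteq A+B$, where $A=\Ann_R(N_j)$ and $B$ is a product of ideals with $B\subseteq\bigcap_{i\neq j}\Ann_R(N_i)$. Concretely, every term of a product of $h$ generators has either many $a$-factors or many $b$-factors; in the second case, each $P_i$-component appears with high multiplicity. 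The module-level fact we need is $B\subseteq\Ann_R(\sum_{i\neq j}N_i)$. This holds because $B\subseteq\prod_{i\neq j}\Ann_R(N_i)\subseteq\bigcap_{i\neq j}\Ann_R(N_i)$, and each such element kills every $N_i$ with $i\neq j$. Also, $J^h$ is still a GV-ideal \cite[Proposition~6.1.9]{WangKim2024}.

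\emph{Step 3.} Let $x\in N_j\cap\sum_{i\neq j}N_i$ and take $c\in J^h$. Write $c=a+b$ with $a\in A$ and $b\in B$. Then $ax=0$ because $x\in N_j$, and $bx=0$ because $x\in\sum_{i\neq j}N_i$. Hence $J^hx=0$. Since $M$ is a $w$-module, it is GV-torsion-free, so $x=0$.

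The main obstacle is Step 2: passing from $w$-comaximality of the radicals to a single GV-ideal sitting inside $\Ann(N_j)+\bigcap_{i\neq j}\Ann(N_i)$. This needs the uniform nilpotence exponent from Theorem~\ref{thm:secondary-characterization}(3), applied to finitely many elements, together with the finite generation of GV-ideals. The pigeonhole argument on the expansion of $J^h$, with $h$ taken at least the sum of the exponents, is where care is required.
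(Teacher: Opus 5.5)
Your proof is correct, but it obtains the key fact by a different route than the paper. The key fact is that $\Ann_R(N_j)+\bigcap_{i\neq j}\Ann_R(N_i)$ contains a GV-ideal.

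The paper argues by contradiction through maximal $w$-ideals. With $I_i=\Ann_R(N_i)$, suppose $(I_j+\bigcap_{i\neq j}I_i)_w$ were proper. Then it would lie in some $Q\in\wMax(R)$. Being prime, $Q$ contains $P_j=\sqrt{I_j}$. Since it also contains $\prod_{i\neq j}I_i$, it contains some $P_i$, contradicting $(P_i+P_j)_w=R$.

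Your Steps 1--2 instead build the GV-ideal explicitly. A product of GV-ideals $J_i\subseteq P_j+P_i$ lies in $P_j+\prod_{i\neq j}P_i$. Only finitely many radical elements are involved, so $J\subseteq\mathfrak a+\mathfrak c$, where $\mathfrak a\subseteq P_j$ and $\mathfrak c=\prod_{i\neq j}\mathfrak p_i$ with $\mathfrak p_i\subseteq P_i$ finitely generated. Then $\mathfrak a^{e}\subseteq\Ann_R(N_j)$ and $\mathfrak c^{e'}\subseteq\prod_{i\neq j}\Ann_R(N_i)$ for suitable $e,e'$. The binomial count then gives $J^{e+e'-1}\subseteq\Ann_R(N_j)+\prod_{i\neq j}\Ann_R(N_i)$; your sketch of this pigeonhole step is sound.

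The paper's version is shorter, but it relies on the existence of a maximal $w$-ideal above every proper $w$-ideal and on the primality of such ideals. Yours uses only that $\GV(R)$ is closed under products and that GV-ideals are finitely generated. It is therefore constructive, avoids Zorn's lemma, and gives an explicit exponent.

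Your Step 3 is exactly the paper's conclusion. Your explicit check that pairwise $w$-comaximality forces the radicals to be distinct fills in a point the paper leaves implicit. The nilpotence you cite from Theorem~\ref{thm:secondary-characterization}(3) is simply the definition of $P_i=\sqrt{\Ann_R(N_i)}$, so no theorem is needed there.
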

\begin{proof}
Write $M=(N_1+\cdots+N_n)_w$ and $I_i=\Ann_R(N_i)$, so $P_i=\sqrt{I_i}$. If $(I_i+I_j)_w$ were proper, it would be contained in a maximal $w$-ideal, which would contain both $P_i$ and $P_j$, a contradiction. Thus the $I_i$ are pairwise $w$-comaximal. The same prime-ideal argument gives $(I_j+\bigcap_{i\neq j}I_i)_w=R$. Choose $J\in\GV(R)$ contained in this sum. If $x\in N_j\cap\sum_{i\neq j}N_i$, then both $I_jx=0$ and $(\bigcap_{i\neq j}I_i)x=0$. Hence $Jx=0$, and GV-torsion-freeness gives $x=0$. Thus $\sum_iN_i=\bigoplus_iN_i$.
\end{proof}

\begin{corollary}\label{cor:dimension-one-nonzero}
Let $R$ be an integral domain with $\wdim R=1$. Every $w$-representable $w$-module admitting a representation with only nonzero radical annihilators is strongly $w$-representable.
\end{corollary}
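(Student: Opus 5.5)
The plan is to reduce the statement to Proposition~\ref{prop:comaximal}. Let $M=(N_1+\cdots+N_n)_w$ be a $w$-secondary representation in which the radicals $P_i=\sqrt{\Ann_R(N_i)}$ are pairwise distinct and all nonzero. Once we know that these radicals are pairwise $w$-comaximal, that proposition shows that the sum $\sum_iN_i$ is direct. Then $M=(N_1\oplus\cdots\oplus N_n)_w$ with distinct radicals, which is exactly strong $w$-representability.

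The first step identifies each $P_i$. By Proposition~\ref{prop:radical}, each $P_i$ is a prime $w$-ideal, and by hypothesis it is nonzero. Since $R$ is a domain, $(0)$ is a prime $w$-ideal: it is GV-torsion-free in the relevant sense, or alternatively one can cite Remark~\ref{rem:local-detection}, since $(0)$ lies in any maximal $w$-ideal. Hence $(0)\subsetneq P_i$ is a chain of prime $w$-ideals.

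Next, $P_i$ lies in some maximal $w$-ideal $Q$, which is prime. If $P_i\subsetneq Q$, we would get a chain $(0)\subsetneq P_i\subsetneq Q$ of prime $w$-ideals of length $2$, contradicting $\wdim R=1$. Therefore every $P_i$ belongs to $\wMax(R)$.

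For $i\neq j$, the ideals $P_i$ and $P_j$ are distinct maximal $w$-ideals. If $(P_i+P_j)_w$ were a proper ideal, it would be a proper $w$-ideal and hence contained in some maximal $w$-ideal $Q'$. Then maximality forces $P_i=Q'=P_j$, a contradiction. So the radicals are pairwise $w$-comaximal, and Proposition~\ref{prop:comaximal} applies.

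The only point needing care is the statement that every proper $w$-ideal lies in a maximal $w$-ideal, i.e.\ the standard Zorn argument for $w$, which is of finite type. We use it freely, as the paper already does in the proof of Proposition~\ref{prop:comaximal}. There is no real obstacle beyond this dimension bookkeeping.
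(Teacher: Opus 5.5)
Your proposal is correct and takes essentially the same approach as the paper: the paper also observes that $\wdim R=1$ forces every nonzero radical $P_i$ to be a maximal $w$-ideal, and then applies Proposition~\ref{prop:comaximal}. You additionally spell out the step the paper leaves implicit, namely that distinct maximal $w$-ideals are $w$-comaximal, which is exactly the hypothesis that proposition requires.
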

\begin{proof}
All the displayed prime $w$-ideals are maximal $w$-ideals. Apply Proposition~\ref{prop:comaximal}.
\end{proof}

\begin{proposition}\label{prop:restriction}
Let $R$ be an integral domain such that every $w$-representable $w$-module is strongly $w$-representable. Then, for each nonzero \emph{prime $w$-ideal} $P$, every representable $R_P$-module is strongly representable.
\end{proposition}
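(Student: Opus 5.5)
Let $P$ be a nonzero prime $w$-ideal and let $L=L_1+\cdots+L_n$ be a minimal classical secondary representation of an $R_P$-module $L$, with $L_i$ secondary over $R_P$ and attached primes $\mathfrak p_iR_P$ pairwise distinct. The plan is to regard $L$ as an $R$-module, apply the hypothesis there, and then transfer the resulting direct decomposition back to $R_P$. If $L=0$ there is nothing to prove, so assume $L\neq0$.

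\textbf{Step 1: $L$ and each $L_i$ are $w$-modules over $R$.} By Remark~\ref{rem:local-detection}, every $R_P$-module is a $w$-module as an $R$-module. Hence $L$ and each $L_i$ are $w$-modules, and in particular GV-torsion-free.

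\textbf{Step 2: each $L_i$ is $w$-secondary over $R$, with distinct radicals.} Take $r\in R$. Its image $r/1$ either acts surjectively on $L_i$ or nilpotently. If $r/1$ is nilpotent on $L_i$, then $r^m$ kills the $R$-module $L_i$. Otherwise $rL_i=L_i$, so $L_i/rL_i=0$, which is GV-torsion. Thus $L_i$ is $w$-secondary. Its radical annihilator over $R$ is the contraction $\mathfrak p_i=\mathfrak p_iR_P\cap R$. Distinct primes of $R_P$ have distinct contractions, so these radicals are pairwise distinct.

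\textbf{Step 3: apply the hypothesis.} Since $L=L_1+\cdots+L_n$ is already a $w$-module, we have $L=(L_1+\cdots+L_n)_w$, and $L$ is $w$-representable. By hypothesis, $L=(N_1\oplus\cdots\oplus N_m)_w$ for some $w$-secondary $R$-submodules $N_j$ with distinct radicals $Q_j$.

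\textbf{Step 4: localize at $P$.} Localization commutes with $w$-envelopes at prime $w$-ideals: the cokernel of $\bigoplus_j N_j\to L$ is GV-torsion, so it vanishes at $P$ by Remark~\ref{rem:local-detection}. Also $L_P=L$, because $L$ is already an $R_P$-module. Therefore
$$L=\bigoplus_j (N_j)_P.$$
By Proposition~\ref{prop:local-secondary}, each nonzero $(N_j)_P$ is classically secondary over $R_P$ with radical $Q_jR_P$. These radicals remain distinct, since $Q_j\subseteq P$ whenever $(N_j)_P\neq0$, and extension to $R_P$ is injective on primes contained in $P$. Discarding the zero summands gives a strong representation of $L$.

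\textbf{Main obstacle.} The delicate point is the identification in Step 4 of $(N_j)_P$ with a submodule of $L$. The natural map $N_j\to (N_j)_P$ need not be injective, because $N_j$ may have torsion. However, $N_j\subseteq L$ and $L$ is an $R_P$-module, so localization is exact and $(N_j)_P$ is precisely the $R_P$-submodule $R_PN_j\subseteq L$. The sum of these submodules stays direct after localization, again by exactness of localization. I would write this step out carefully; everything else is routine.
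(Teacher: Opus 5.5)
Your proof is correct and follows the paper's route. You restrict scalars to view $L$ as a $w$-representable $w$-module over $R$, apply the hypothesis, and transfer the direct decomposition back to $R_P$. The only difference is in that last transfer step: the paper replaces each $N_j$ by its $w$-envelope via Lemma~\ref{lem:direct-sums} and checks $R_P$-stability by hand, whereas you localize at $P$ and invoke Proposition~\ref{prop:local-secondary}. Both give the same summands, since $R_PN_j$ is exactly the $w$-closure of $N_j$ in $L$.
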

\begin{proof}
Let $L$ be a representable $R_P$-module. The module $L$ and each of its $R_P$-secondary components are $w$-modules over $R$ \cite[Proposition~1.5]{WangKim2015}. Restriction of scalars preserves classical secondaryness \cite[Lemma~2.4]{PakyariRMJ2023}; hence $L$ is $w$-representable over $R$.

By hypothesis, choose a strong $w$-representation of $L$, not necessarily with the original components. Taking $w$-envelopes of its components and using Lemma~\ref{lem:direct-sums} gives an ordinary direct sum $L=A_1\oplus\cdots\oplus A_t$ of $w$-secondary $w$-modules. Each $A_i$ is an $R_P$-submodule. Indeed, for $x\in A_i$ and $s\notin P$, decompose $s^{-1}x=\sum_jx_j$ in this direct sum. Then $sx_j=0$ for $j\neq i$, so $x_j=0$ because $s$ acts injectively on $L$. Thus $s^{-1}x\in A_i$.

For a scalar $r\in R$, the $w$-secondary alternatives for $A_i$ become the classical alternatives after localization at $P$, since $A_i=(A_i)_P$ and every GV-torsion module vanishes there. Hence every $A_i$ is classically secondary over $R_P$, proving the assertion.
\end{proof}

\section{On $t$-almost Dedekind domains}
Following \cite{Kang1989}, a nonfield integral domain $R$ is \emph{$t$-almost Dedekind} if $R_P$ is a rank-one discrete valuation domain (DVR) for every $P\in\tMax(R)=\wMax(R)$. These domains admit ideal-theoretic, ring-theoretic, and homological characterizations; see \cite{Kang1989,Zhou2021}. By \cite[Theorem~4.5]{Kang1989}, a $t$-almost Dedekind domain is a {\em Pr\"ufer $v$-multiplication domain} (P$v$MD), meaning that every nonzero finitely generated ideal is $t$-invertible. In particular $t=w$, and $\wdim R=1$.

The following is the $w$-analogue of the bounded secondary-module classification over almost Dedekind domains \cite[Theorem~2.7]{Pakyari2020}.
\begin{proposition}\label{prop:classification}
Let $R$ be a nonfield $t$-almost Dedekind domain. Let $M$ be a nonzero $w$-secondary $w$-module with $P=\sqrt{\Ann_R(M)}\neq0$. Then
$$
 M\cong\bigoplus_{i\in\Gamma}\bigl(R/(P^{n_i})_t\bigr)_w,
$$
where $\Gamma\neq\varnothing$, every $n_i\geq1$, and the set of exponent values is finite.
\end{proposition}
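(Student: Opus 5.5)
The plan is to reduce to a bounded module over the DVR $R_P$, decompose it by the structure theory of bounded modules over a principal ideal domain, and then translate each cyclic summand back into the $w$-language. Throughout I use that a $t$-almost Dedekind domain is a P$v$MD, so $t=w$ and $\wdim R=1$.

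\emph{Step 1 (reduction to $R_P$).} By Proposition~\ref{prop:radical}, $P$ is a nonzero prime $w$-ideal. Since $\wdim R=1$, it follows that $P\in\wMax(R)$. Proposition~\ref{prop:localization-isomorphism} then gives $M\cong M_P$ canonically, so $M$ is an $R_P$-module. Because $R$ is $t$-almost Dedekind, $R_P$ is a DVR. Pick $\pi\in P$ generating $PR_P$; this is possible since $PR_P$ is principal and is generated by an element of $P$ after clearing a unit. As $\pi\in P=\sqrt{\Ann_R(M)}$, there is $n\geq1$ with $\pi^nM=0$. Hence $M_P$ is a nonzero bounded module over the principal ideal domain $R_P$.

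\emph{Step 2 (decomposition).} By the classical Prüfer--Baer theorem, a bounded module over a PID is a direct sum of cyclic modules. This gives
$$
 M_P\cong\bigoplus_{i\in\Gamma}R_P/\pi^{n_i}R_P=\bigoplus_{i\in\Gamma}R_P/P^{n_i}R_P,
$$
with $1\leq n_i\leq n$. The zero summands with $n_i=0$ are discarded. Thus the set of exponent values is finite, and $\Gamma\neq\varnothing$ because $M\neq0$.

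\emph{Step 3 (identification of the summands).} This is the main point. I must show $\bigl(R/(P^k)_t\bigr)_w\cong R_P/P^kR_P$ for each $k\geq1$.
\begin{enumerate}
\item Since $t=w$, the ideal $(P^k)_t=(P^k)_w$ is a $w$-ideal. Therefore $R/(P^k)_w$ is GV-torsion-free, being the quotient of the $w$-module $R$ by a $w$-submodule \cite[Theorem~6.1.17]{WangKim2024}.
\item Consider the natural map $\varphi\colon R/(P^k)_w\to R_P/P^kR_P$. Its kernel is $(P^kR_P\cap R)/(P^k)_w$. To see this kernel is zero, I check that $P^kR_P\cap R$ and $(P^k)_w$ have the same localization at every maximal $w$-ideal $Q$. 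For $Q\neq P$, both localizations are $R_Q$, because $P\not\subseteq Q$. At $Q=P$, both localizations are $P^kR_P$. Two $w$-ideals agreeing at all maximal $w$-ideals are equal, by local detection of membership in the relative $w$-closure.
\item The cokernel of $\varphi$ vanishes at every maximal $w$-ideal. At $P$ the map $\varphi$ localizes to an isomorphism. At $Q\neq P$ both sides vanish, since a power of an element of $P\setminus Q$ kills them. Hence the cokernel is GV-torsion by Remark~\ref{rem:local-detection}.
\item The target $R_P/P^kR_P$ is a $w$-module by \cite[Proposition~1.5]{WangKim2015}. Together with (2) and (3), this shows that the target is the $w$-envelope of $R/(P^k)_t$.
\end{enumerate}

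\emph{Step 4 (reassembly).} Finally, the sum over $\Gamma$ is handled directly: the direct sum of the $w$-modules $R_P/P^{n_i}R_P$ is already a $w$-module, as shown in the proof of Lemma~\ref{lem:direct-sums}. The desired isomorphism is therefore obtained by composing $M\cong M_P$ with the decomposition of Step~2 and the summandwise identifications of Step~3. I expect Step~3, in particular the kernel computation $P^kR_P\cap R=(P^k)_w$, to be the main obstacle, and I would handle it by the maximal-$w$-localization comparison described in Step~3(2).
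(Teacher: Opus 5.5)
Your proposal is correct and follows essentially the same route as the paper: identify $M$ with the bounded $R_P$-module $M_P$, decompose it by the structure theorem for bounded modules over a DVR, and show that $R_P/P^kR_P$ is the $w$-envelope of $R/(P^k)_t$, because the canonical map has zero kernel, GV-torsion cokernel and a $w$-module target. The only difference is cosmetic: you compute the kernel explicitly as $(P^kR_P\cap R)/(P^k)_w$ and compare localizations, whereas the paper simply notes that the kernel vanishes at every maximal $w$-ideal and is therefore a GV-torsion submodule of the GV-torsion-free module $R/(P^k)_w$.
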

\begin{proof}
The prime $w$-ideal $P$ is maximal because $\wdim R=1$. Propositions~\ref{prop:local-secondary} and \ref{prop:localization-isomorphism} identify $M$ with the $PR_P$-secondary module $M_P$. Write $PR_P=aR_P$. Some power $a^h$ annihilates $M_P$. The classical structure theorem for arbitrary bounded torsion modules over a DVR gives
$$
 M_P\cong\bigoplus_{i\in\Gamma}R_P/P^{n_i}R_P,
 \qquad 1\leq n_i\leq h;
$$
see \cite{Kaplansky1952} and \cite[Proposition~2.2]{Pakyari2020}. No finite-generation hypothesis on $M$ is needed for this bounded-module theorem.

For $n\geq1$, put $I=(P^n)_t=(P^n)_w$ and $C=R/I$. Since $I$ is $w$-closed, $C$ is GV-torsion-free. Also $P^n\subseteq I\subseteq P$ and $P^nC=0$, so $C$ is $P$-$w$-secondary by Lemma~\ref{lem:bounded-secondary}. At $P$, $C_P\cong R_P/P^nR_P$, whereas $C_Q=0$ at every other maximal $w$-ideal. The canonical map $C\to C_P$ has zero kernel and GV-torsion cokernel: its kernel vanishes at every maximal $w$-ideal and is a submodule of $C$. Since $C_P$ is a $w$-module, it is the $w$-envelope of $C$ \cite{WangKim2024,WangKim2015}. Thus $C_w\cong R_P/P^nR_P$. Substituting these isomorphisms proves the result.
\end{proof}

The inner $t$-closure cannot simply be omitted: $R/P^n$ need not be GV-torsion-free. Since $t=w$ on a P$v$MD, one may equally write $(R/(P^n)_w)_w$; the outer closure is the module $w$-envelope.

\begin{corollary}\label{cor:finite-type}
Let $R$ be a nonfield $t$-almost Dedekind domain, $M$ a nonzero $w$-module of finite type, and $P$ a nonzero prime $w$-ideal. Then $M$ is $w$-secondary with radical annihilator $P$ if and only if, for every $x\in M$, there is $n\geq1$ such that $P^nx=0$.
\end{corollary}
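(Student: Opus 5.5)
We argue the two directions separately. Throughout, $R$ is a P$v$MD with $t=w$ and $\wdim R=1$, so the nonzero prime $w$-ideal $P$ is a maximal $w$-ideal and $R_P$ is a DVR.

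For the forward direction, assume $M$ is $w$-secondary with $\sqrt{\Ann_R(M)}=P$. By Proposition~\ref{prop:classification}, $M$ is isomorphic to a direct sum of modules $R_P/P^{n_i}R_P$ with finitely many exponent values. Hence $P^hM=0$, where $h$ is the largest exponent. In fact it is simpler to bypass the classification. Since $P=\sqrt{\Ann_R(M)}$, each $x\in M$ satisfies: for every $r\in P$ some power of $r$ kills $x$. Choose a finitely generated $F\subseteq P$ with $F_w=P$, which exists because $P$ is $t$-ideal of finite type in a P$v$MD. Then $F^kM=0$ for some $k$, so $F^kx=0$. Passing from $F^k$ to $P^n$ goes through the isomorphism $M\cong M_P$ of Proposition~\ref{prop:localization-isomorphism}. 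Since $P^nR_P=F^kR_P$ for suitable $n$ in the DVR, $P^nx=0$ in $M_P\cong M$. Finite type is not needed in this direction.

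For the converse, assume every $x\in M$ is killed by some $P^{n_x}$. Finite type gives $M=L_w$ with $L=Rx_1+\cdots+Rx_m$ finitely generated. Let $n=\max n_{x_i}$. Then $P^nL=0$, and since $\Ann_R(L)=\Ann_R(L_w)$ by the argument in Theorem~\ref{thm:envelope-maximal-localization}, we get $P^nM=0$. The module $M$ is nonzero and GV-torsion-free, and $P$ is a nonzero maximal $w$-ideal. So Lemma~\ref{lem:bounded-secondary} shows that $M$ is $w$-secondary.

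It remains to identify the radical. We have $P\subseteq\sqrt{\Ann_R(M)}$. By Proposition~\ref{prop:radical}, $\sqrt{\Ann_R(M)}$ is a proper prime $w$-ideal, and it is proper because $M\neq0$. Maximality of $P$ then gives equality.

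The main obstacle is the annihilator transfer from $L$ to $L_w$: $L$ need not be a $w$-module, so we must check that $\Ann_R(L)$ kills $L_w$ directly. For $y\in L_w$, take $J\in\GV(R)$ with $Jy\subseteq L$. Then $J(ay)=0$ for all $a\in P^n$, and GV-torsion-freeness of $M$ gives $ay=0$. The localization bookkeeping in the forward direction is the other point to handle carefully.
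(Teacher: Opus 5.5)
Your proof is correct and is essentially the paper's: the forward direction takes the uniform bound $P^hM=0$ from Proposition~\ref{prop:classification}, and the converse writes $M=L_w$ with $L$ finitely generated, pushes $P^nL=0$ to $P^nM=0$ through a GV-ideal and GV-torsion-freeness, and finishes with Lemma~\ref{lem:bounded-secondary} and maximality of $P$.

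Drop the optional ``bypass'' of the classification, because it rests on a false claim: a maximal $t$-ideal of a P$v$MD need not be of finite type. For example, take a non-Dedekind almost Dedekind domain. It is $t$-almost Dedekind with $w=d$, and some maximal ideal $P$ is not finitely generated, so no finitely generated $F$ has $F_w=P$.

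The repair is to take $a\in P$ with $PR_P=aR_P$. Then $a^hM=0$ for some $h$, so $P^hM_P=a^hM_P=0$, and hence $P^hM=0$ by Proposition~\ref{prop:localization-isomorphism}. This is exactly the first step inside the proof of Proposition~\ref{prop:classification}.
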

\begin{proof}
In the forward direction, Proposition~\ref{prop:classification} gives a single $h$ with $P^hM=0$. Conversely, write $M=(m_1R+\cdots+m_kR)_w$. Choose $h$ such that $P^hm_j=0$ for every $j$. For $x\in M$, choose $J\in\GV(R)$ with $Jx\subseteq\sum_jm_jR$. Then $J(P^hx)=0$, so $P^hx=0$ by GV-torsion-freeness. Hence $M$ is $w$-secondary by Lemma~\ref{lem:bounded-secondary}, and its proper radical annihilator is $P$ by maximality.
\end{proof}

The next result retains the global decomposition theorem when all component radicals are nonzero; compare \cite[Theorem~4.6]{Pakyari2020}.
\begin{theorem}\label{thm:nonzero-radicals}
Let $R$ be a $t$-almost Dedekind domain and let $M$ be a nonzero $w$-module. The following are equivalent.
\begin{enumerate}
\item $M$ admits a minimal $w$-secondary representation with only nonzero radical annihilators.
\item $M$ admits a strong $w$-representation with only nonzero radical annihilators.
\item There are pairwise distinct nonzero prime $w$-ideals $P_1,\ldots,P_t$ such that
$$
 M\cong\bigoplus_{i=1}^n
       \left(\bigoplus_{j\in\Gamma_i}
       \bigl(R/(P_i^{n_{ij}})_t\bigr)_w\right),
$$
where each $\Gamma_i$ is nonempty and each set $\{n_{ij}\mid j\in\Gamma_i\}$ is a finite set of positive integers.
\end{enumerate}
\end{theorem}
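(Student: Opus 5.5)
I would prove the cycle (1)$\Rightarrow$(2)$\Rightarrow$(3)$\Rightarrow$(1). Since $R$ is $t$-almost Dedekind, it is a P$v$MD with $t=w$ and $\wdim R=1$. So every nonzero prime $w$-ideal is a maximal $w$-ideal, and distinct ones are $w$-comaximal.

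For (1)$\Rightarrow$(2), write $M=(N_1+\cdots+N_n)_w$ with pairwise distinct nonzero radicals $P_i$. These are maximal $w$-ideals, hence pairwise $w$-comaximal. Corollary~\ref{cor:dimension-one-nonzero}, which is Proposition~\ref{prop:comaximal}, shows the sum $N_1+\cdots+N_n$ is direct. Thus $M=(N_1\oplus\cdots\oplus N_n)_w$ is a strong $w$-representation with the same nonzero radicals.

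For (2)$\Rightarrow$(3), start from $M=(N_1\oplus\cdots\oplus N_n)_w$. Lemma~\ref{lem:direct-sums} gives $M\cong(N_1)_w\oplus\cdots\oplus(N_n)_w$. The components $N_i$ are GV-torsion-free because they are submodules of $M$. By Theorem~\ref{thm:envelope-maximal-localization}, each $(N_i)_w$ is a nonzero $w$-secondary $w$-module with radical annihilator $P_i\neq0$. Proposition~\ref{prop:classification} then decomposes $(N_i)_w$ as $\bigoplus_{j\in\Gamma_i}(R/(P_i^{n_{ij}})_t)_w$, with $\Gamma_i$ nonempty and finitely many exponent values. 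Taking $t=n$ and the given distinct $P_i$ yields (3).

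For (3)$\Rightarrow$(1), fix $i$ and put $L_i=\bigoplus_{j\in\Gamma_i}(R/(P_i^{n_{ij}})_t)_w$. By the proof of Proposition~\ref{prop:classification}, each summand is isomorphic to $R_{P_i}/P_i^{n_{ij}}R_{P_i}$. Hence $L_i$ is a $w$-module: it is a direct sum of $w$-modules, using Lemma~\ref{lem:direct-sums}, or directly because it is an $R_{P_i}$-module. It is nonzero, and it is annihilated by $P_i^{h_i}$ with $h_i=\max_j n_{ij}$, which is finite because the exponent set is finite. Lemma~\ref{lem:bounded-secondary} then shows $L_i$ is $w$-secondary. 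Its radical annihilator is a proper $w$-ideal containing the maximal $w$-ideal $P_i$, so it equals $P_i$. Transporting via the isomorphism, the images $N_i$ of the $L_i$ in $M$ are $w$-secondary submodules with $M=N_1\oplus\cdots\oplus N_n=(N_1+\cdots+N_n)_w$. Since the $P_i$ are pairwise distinct and nonzero, this is a minimal $w$-secondary representation.

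The main obstacle is (2)$\Rightarrow$(3), namely applying Proposition~\ref{prop:classification} to the components. That result requires a $w$-module, while the components of a strong $w$-representation need not be $w$-closed. I handle this by passing to $(N_i)_w$ through Theorem~\ref{thm:envelope-maximal-localization}, which preserves both $w$-secondaryness and the annihilator, and by using Lemma~\ref{lem:direct-sums} to identify $M$ with the direct sum of these envelopes. A minor point is the statement's index mismatch ($P_1,\dots,P_t$ versus $\bigoplus_{i=1}^n$); I read both as the same number of primes.
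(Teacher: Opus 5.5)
Your proposal is correct and follows the paper's proof step by step: Proposition~\ref{prop:comaximal} for (1)$\Rightarrow$(2); passing to $w$-envelopes via Theorem~\ref{thm:envelope-maximal-localization} and Lemma~\ref{lem:direct-sums}, then applying Proposition~\ref{prop:classification}, for (2)$\Rightarrow$(3); and Lemma~\ref{lem:bounded-secondary} on each bounded inner sum for (3)$\Rightarrow$(1). Your extra justifications only make explicit what the paper leaves implicit: each inner sum is a $w$-module because it is an $R_{P_i}$-module, and its radical annihilator equals $P_i$ by maximality.
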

\begin{proof}
For (1)$\Rightarrow$(2), all the displayed radical annihilators are maximal $w$-ideals, so Proposition~\ref{prop:comaximal} applies. For (2)$\Rightarrow$(3), take the $w$-envelopes of the direct components. They remain secondary by Theorem~\ref{thm:envelope-maximal-localization}, and Lemma~\ref{lem:direct-sums} gives an ordinary finite direct sum of $w$-modules. Apply Proposition~\ref{prop:classification} to each. Conversely, for each $i$ the inner sum is a nonzero $w$-module annihilated by a fixed power of $P_i$, so it is $P_i$-$w$-secondary by Lemma~\ref{lem:bounded-secondary}. This gives (3)$\Rightarrow$(1).
\end{proof}

A zero-radical component requires a different argument. The following proposition asserts a local complement, not a direct summand of the prescribed bounded component and not an automatically descended submodule.

\begin{proposition}\label{prop:local-splitting}
Let $R$ be a $t$-almost Dedekind domain and let
$$
 M=(K_1+\cdots+K_n)_w
$$
be a minimal $w$-secondary representation with $\sqrt{\Ann_R(K_1)}=0$ and $P_i=\sqrt{\Ann_R(K_i)}\neq0$ for $i\geq2$. Then:
\begin{enumerate}
\item For each $i\geq2$, there is an $R_{P_i}$-submodule $B_i\subseteq M_{P_i}$ such that
$$
 M_{P_i}=(K_1)_{P_i}\oplus B_i.
$$
The module $B_i$ is either zero or $P_iR_{P_i}$-secondary. It can be written $B_i=(1-\alpha_i)((K_i)_{P_i})$, where $\alpha_i:(K_i)_{P_i}\to(K_1)_{P_i}$ restricts to the identity on their intersection.
\item If $Q\in\wMax(R)\setminus\{P_2,\ldots,P_n\}$, then $M_Q=(K_1)_Q$, which is either zero or divisible over $R_Q$.
\item Every nonzero $M_Q$, $Q\in\wMax(R)$, is strongly representable over $R_Q$.
\end{enumerate}
\end{proposition}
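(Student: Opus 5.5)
The plan is to localize the representation at each maximal $w$-ideal and then split off the divisible part using injectivity over the DVR $R_Q$.

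\emph{Setup.} Put $N=K_1+\cdots+K_n\subseteq M$. Since $M=N_w$, the inclusion $N\subseteq M$ has GV-torsion cokernel. By Remark~\ref{rem:local-detection}, or directly by \cite[Theorem~6.2.14]{WangKim2024}, this gives $M_Q=N_Q=\sum_i(K_i)_Q$ inside $M_Q$ for every $Q\in\wMax(R)$.

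For $i\geq2$ the radical $P_i$ is nonzero, hence a maximal $w$-ideal because $\wdim R=1$. Lemma~\ref{lem:support} then gives $\wSupp(K_i)=\{P_i\}$. So $(K_i)_Q=0$ whenever $Q\neq P_i$, and the $P_i$ are pairwise distinct by minimality. Consequently
$$
M_{P_i}=(K_1)_{P_i}+(K_i)_{P_i},
\qquad
M_Q=(K_1)_Q \quad\text{for } Q\notin\{P_2,\ldots,P_n\}.
$$
The second equality is the first half of (2).

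\emph{Divisibility of the $K_1$-part.} By Proposition~\ref{prop:local-secondary}, every nonzero $(K_1)_Q$ is classically secondary over $R_Q$ with radical annihilator $0\cdot R_Q=0$. In a domain this means every nonzero $r\in R_Q$ acts surjectively, so $(K_1)_Q$ is divisible over $R_Q$. This finishes (2).

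\emph{The splitting in (1).} Fix $i\geq2$ and put $D=(K_1)_{P_i}$ and $E=(K_i)_{P_i}$. Since $R_{P_i}$ is a DVR, hence a PID, the divisible module $D$ is injective over $R_{P_i}$. Therefore the identity of $D\cap E$ extends to an $R_{P_i}$-linear map $\alpha_i\colon E\to D$. Set $B_i=(1-\alpha_i)(E)$. Two checks are needed.
\begin{itemize}
\item $D+B_i=M_{P_i}$: each $y\in E$ satisfies $y=\alpha_i(y)+(1-\alpha_i)(y)$, and $\alpha_i(y)\in D$.
\item $D\cap B_i=0$: if $(1-\alpha_i)(y)\in D$, then $y\in D\cap E$, so $\alpha_i(y)=y$ and $(1-\alpha_i)(y)=0$.
\end{itemize}

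It remains to see that $B_i$ is zero or $P_iR_{P_i}$-secondary. By Proposition~\ref{prop:classification} together with Proposition~\ref{prop:localization-isomorphism}, $K_i\cong E$ is annihilated by a fixed power $P_i^hR_{P_i}$. Its homomorphic image $B_i$ is therefore bounded in the same way. Over the local ring $R_{P_i}$, every non-unit then acts nilpotently on $B_i$ and every unit acts bijectively. So a nonzero $B_i$ is $P_iR_{P_i}$-secondary.

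\emph{Strong representability in (3).} Let $Q\in\wMax(R)$ with $M_Q\neq0$.
\begin{itemize}
\item If $Q=P_i$ for some $i\geq2$, then $M_Q=D\oplus B_i$. Discard whichever summands are zero. The surviving summands are secondary with distinct radicals $0$ and $P_iR_{P_i}$, giving a strong representation.
\item Otherwise $M_Q=(K_1)_Q$ is nonzero and secondary by Proposition~\ref{prop:local-secondary}, so it is trivially strongly representable.
\end{itemize}

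\emph{Expected obstacle.} The main point needing care is the first step. One must verify that the localization of $N_w$ at $Q$ really equals the sum of the images of the $(K_i)_Q$ inside $M_Q$, with no extra contribution from the $w$-closure. One must also be clear that the injectivity used is over $R_{P_i}$, not over $R$: $D$ need not be injective as an $R$-module, which is why the complement is only local.
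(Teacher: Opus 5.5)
Your proof is correct and follows essentially the same route as the paper. You localize, use $\wSupp(K_i)=\{P_i\}$ and the divisibility of $(K_1)_Q$, split off $D=(K_1)_{P_i}$ by its injectivity over the DVR $R_{P_i}$, and set $B_i=(1-\alpha_i)(E)$.

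One small slip: the components $K_i$ need not be $w$-modules (e.g.\ $k[x,y]/(x)$ over $k[x,y]$, $k$ a field, has $w$-envelope $k(y)$), so ``$K_i\cong E$'' via Proposition~\ref{prop:localization-isomorphism} is unjustified. You do not need it, since $E$ is already $P_iR_{P_i}$-secondary by Proposition~\ref{prop:local-secondary}, and a nonzero homomorphic image of a secondary module is secondary with the same radical; this is how the paper concludes.
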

\begin{proof}
For $j\geq2$, Lemma~\ref{lem:support} gives $(K_j)_Q=0$ unless $Q=P_j$. Also $(K_1)_Q$ is divisible: for $0\neq r\in R$, the quotient $K_1/rK_1$ is GV-torsion because $\Ann_R(K_1)=0$. Localization gives $r(K_1)_Q=(K_1)_Q$.

Fix $i\geq2$ and put $V=R_{P_i}$, $D=(K_1)_{P_i}$, and $T=(K_i)_{P_i}$. Then $M_{P_i}=D+T$, and $D$ is divisible over the DVR $V$, hence injective by Baer's criterion \cite{WangKim2024,Lam2012}. Therefore the inclusion $D\cap T\hookrightarrow D$ extends to $\alpha_i:T\to D$. The image $B_i=(1-\alpha_i)T$ satisfies $D\cap B_i=0$ and $D+B_i=D+T$. It is zero or a secondary image of the $P_iV$-secondary module $T$. This proves (1). The support calculation gives (2), and (3) follows from (1) and (2), omitting zero summands.
\end{proof}

We call a $w$-module $M$ \emph{locally strongly $w$-representable} if $M_Q$ is zero or strongly representable over $R_Q$ for every $Q\in\wMax(R)$. This is a condition on localized modules, not on the descent of their chosen summands.

\begin{theorem}\label{thm:main}
Let $R$ be a nonfield integral domain. Then $R$ is a $t$-almost Dedekind domain if and only if $\wdim R=1$ and every $w$-representable $w$-module is locally strongly $w$-representable.

In particular, the condition that $\wdim R=1$ and every $w$-representable $w$-module is globally strongly $w$-representable still implies that $R$ is $t$-almost Dedekind.
\end{theorem}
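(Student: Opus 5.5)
The forward direction should follow almost entirely from the results of Section~3. Let $R$ be $t$-almost Dedekind. We already know $\wdim R=1$ from the remarks preceding Proposition~\ref{prop:classification}. Let $M=(K_1+\cdots+K_n)_w$ be a minimal $w$-secondary representation; since the radicals are pairwise distinct, at most one component has radical $0$. If none does, Corollary~\ref{cor:dimension-one-nonzero} makes $M$ strongly $w$-representable. Each nonzero $M_Q$ is then the finite direct sum of the nonzero $(K_i)_Q$, which are classically secondary by Proposition~\ref{prop:local-secondary}; distinct radicals are forced by Lemma~\ref{lem:support}, since only $K_i$ with $P_i=Q$ survive. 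If exactly one component has radical $0$, relabel it $K_1$ and apply Proposition~\ref{prop:local-splitting}(3) directly.

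For the converse, assume $\wdim R=1$ and the local condition. Fix $P\in\wMax(R)$; it has height one, so $R_P$ is a one-dimensional local domain. The goal is to show that $R_P$ is a DVR. I would transport the local hypothesis to $R_P$, mimicking Proposition~\ref{prop:restriction}. Take any representable $R_P$-module $L=N_1+\cdots+N_k$ with $N_i$ classically secondary. As in that proof, $L$ and the $N_i$ are $w$-modules over $R$ by \cite[Proposition~1.5]{WangKim2015}, and they are $w$-secondary over $R$ by restriction of scalars and Remark~\ref{rem:classical-secondary}. Their $R$-radicals are contractions of distinct primes of $R_P$, hence still distinct, so $L$ is $w$-representable over $R$. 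By hypothesis $L_P\cong L$ is strongly representable over $R_P$. Thus every representable $R_P$-module is strongly representable.

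The main obstacle is then the classical theorem that a one-dimensional local domain over which every representable module is strongly representable is a DVR. This is the local content of Pakyari's characterization of almost Dedekind domains (cf.\ \cite{Pakyari2020,PakyariRMJ2023}), and I would cite it if it is available in that form. Otherwise I would argue directly. Let $(A,\mathfrak m)=R_P$ with quotient field $F$, and consider $L=F\oplus A/\mathfrak m$ modulo a suitable identification, or more naturally $L=F/xA\,+\,$ a bounded piece. The cleanest test module is $L=F+A/\mathfrak m$ glued, i.e.\ submodules of $E(A/\mathfrak m)$-type. First I would use $\mathfrak m$-secondary modules of the form $\mathfrak m^{-1}/A$ together with the divisible module $F$ to show that $\mathfrak m$ is invertible. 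I would then show that $A$ is Noetherian, for instance by showing that $\mathfrak m$ is finitely generated via invertibility. A Noetherian local one-dimensional domain with invertible maximal ideal is a DVR. I expect this step, choosing the right test module so that strong representability forces invertibility of $\mathfrak m$, to be the hardest part.

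The ``in particular'' clause should be immediate once this is in place. A global strong representation localizes to a direct sum of the nonzero localizations, which are secondary with distinct radicals by Proposition~\ref{prop:local-secondary} and Lemma~\ref{lem:support} (in dimension one each nonzero localization at $Q$ has radical $0$ or $QR_Q$). Hence global strong $w$-representability implies the local condition, and the converse direction applies.
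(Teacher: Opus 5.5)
Your proposal follows the paper's proof essentially step for step. In the forward direction you use Corollary~\ref{cor:dimension-one-nonzero} (equivalently Theorem~\ref{thm:nonzero-radicals} via Proposition~\ref{prop:comaximal}) when all radicals are nonzero, and Proposition~\ref{prop:local-splitting} when a zero radical occurs; the converse restricts scalars from $R_P$ to $R$ and applies the local one-dimensional theorem, which is exactly \cite[Theorem~2.7]{PakyariRMJ2023}. That citation is available in the form you need, so your fallback test-module argument can be dropped; as sketched it would not yet be a proof.
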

\begin{proof}
Suppose $R$ is $t$-almost Dedekind. Then $\wdim R=1$ by \cite[Theorem~4.5]{Kang1989}. If a $w$-representation has only nonzero radicals, Theorem~\ref{thm:nonzero-radicals} gives a strong global representation, whose nonzero localizations are strong. If a zero radical occurs together with other radicals, use Proposition~\ref{prop:local-splitting}. A single zero-radical component has divisible, hence secondary or zero, localizations. Thus every $w$-representable $w$-module is locally strongly $w$-representable.

Conversely, let $\wdim R=1$ and assume the stated local representability condition. Fix $P\in\wMax(R)$. Then $P\neq0$ and $\dim R_P=1$, by Remark~\ref{rem:local-detection}. Every representable $R_P$-module $L$, together with its classical secondary components, is a $w$-module over $R$ by \cite[Proposition~1.5]{WangKim2015}. Restriction of scalars makes those components $w$-secondary \cite[Lemma~2.4]{PakyariRMJ2023}. Hence $L$ is a $w$-representable $w$-module over $R$. By hypothesis, $L=L_P$ is strongly representable over $R_P$. The local one-dimensional converse \cite[Theorem~2.7]{PakyariRMJ2023} now implies that $R_P$ is Dedekind, hence a DVR. Since $P$ was arbitrary, $R$ is $t$-almost Dedekind.

For the last assertion, a finite strong $w$-representation localizes to a direct sum of classical secondary modules or zero modules by Proposition~\ref{prop:local-secondary}. Hence global strong $w$-representability implies the local condition just proved sufficient.
\end{proof}

Theorem~\ref{thm:main} does not assert that every $w$-representable module over an arbitrary $t$-almost Dedekind domain is globally strongly $w$-representable. Proposition~\ref{prop:local-splitting} constructs its correction maps only after localization. The global conclusion with nonzero component radicals remains Theorem~\ref{thm:nonzero-radicals}. In particular, the local converse in \cite[Theorem~2.7]{PakyariRMJ2023}, rather than the unrestricted global assertion in its Theorem~2.9, is the result used above.

\end{document}